\documentclass[reqno]{amsart}
\usepackage{amscd}
\usepackage{amssymb}
\usepackage[dvips]{graphics}
\usepackage[margin=1in,includeheadfoot]{geometry}
\usepackage{url}
\usepackage{hyperref}
\usepackage{cite}
\usepackage{color}

\textwidth 6.2in \textheight 8.5in \oddsidemargin -0.01truein
\topmargin -.2truein

\def\beq{\begin{equation}}
\def\eeq{\end{equation}}
\def\ba{\begin{array}}
\def\ea{\end{array}}

\def\R{\mathbb R}







\newtheorem{thm}{Theorem}[section]
\newtheorem{lm}[thm]{Lemma}

\newtheorem{crl}[thm]{Corollary}

\theoremstyle{definition}
\newtheorem{rem}[thm]{Remark}

\theoremstyle{remark}

\begin{document}
\pagestyle{plain}\today
\title{The eternal solutions of parabolic equations with boundary condition}
\author{Jingqi Liang}
\email{liangjq2017@sjtu.edu.cn}
\address{Jingqi Liang: Institute of Natural Sciences, Shanghai Jiao Tong University, Shanghai 200240, China}


 \author{Lidan Wang}
\email{wanglidan@ujs.edu.cn}
\address{Lidan Wang: School of Mathematical Sciences, Jiangsu University, Zhenjiang 221013, China}

\thanks{
The first author is supported by Natural Science Foundation of Shanghai, No.24ZR1440700. The corresponding author is Lidan Wang and the second author is supported by NSFC of China, No.12401135.}

\begin{abstract}
In this paper, we study the parabolic equations of the form
$$
\left\{
\begin{array}{rcll}
Lu(y,t) &=& f, \qquad &(y,t)\in Q,\\
u(y,t)&=& 0,  \qquad &(y,t)\in \partial Q, \\
u(y,t)&& \hspace{-8mm}\mbox{is uniformly bounded from below}, \qquad &(y,t)\in Q,
\end{array}
\right.
$$
where $Q=\Omega\times\mathbb{R}\subset\mathbb{R}^{n+1}$ and $\Omega\subset\mathbb{R}^{n}$ is a bounded Lipschitz domain with $0\in\Omega$. Here $L$ is a general second order uniformly parabolic differential operator in non-divergence form or divergence form. For $f=0$, we establish the structure of the solution space, which is one dimensional and the solutions in this space grow exponentially at one end and decay exponentially at the other. For $f\neq0$, we show that all solutions can be presented by the solutions corresponding to the homogenous equations($f=0$) and a bounded special solution of the inhomogeneous equations. Our method is based on maximum principle in $Q$ and the Harnack type inequalities.

\end{abstract}

\maketitle

{\bf Keywords: } parabolic equations, maximum principle, Harnack inequalities, structure of solutions

\
\

{\bf Mathematics Subject Classification 2020:} 35K10, 35A02.

\section{Introduction}
The solutions of elliptic equations on unbounded domains with boundary condition have been studied extensively, see for examples \cite{B,CC,GP,LN,M}. It is well known that any positive solution of the equation $\Delta u=0$ on $\R\times(0,\pi)$ with zero boundary condition can be presented by two
linearly independent positive harmonic functions $e^{x}\sin y$ and $e^{-x}\sin y$. This result was extended to second order elliptic operators by Bao, Wang and Zhou \cite{BWZ}. More precisely, Bao et al. considered the second order homogeneous equations on unbounded cylinders with zero boundary condition and proved that all positive solutions are linear combinations of two special positive solutions with exponential growth at one end and exponential decay at the other. After that, Wang, Wang and Zhou \cite{LWZ} generalized the results of Bao et al. \cite{BWZ} to second order elliptic equations with lower order terms. Moreover, they established that the solutions of the inhomogeneous equations are generated by the solutions of the corresponding homogenous equations and a bounded special solution of the inhomogeneous equations. Later, Wang, Wang and Zhou \cite{LWZ1} studied the fully nonlinear inhomogeneous elliptic equations on unbounded cylinders with zero boundary condition and showed that two special solution spaces (exponential growth at one
end and exponential decay at the another) are one dimensional, independently. While the solutions in the third solution space can be controlled by the solutions
in the other two special solution spaces under some conditions, respectively. Hang and Lin \cite{HL1} proved that, for a class of divergence form elliptic equations on unbounded cylinders with zero boundary, the space of fixed order exponential growth solutions is of finite dimension. While the authors in \cite{LWZ3} proved that, for a class of non-divergence form elliptic equations on unbounded cylinders with zero boundary, the space of fixed order exponential growth solutions is  finite dimensional. For more related works about the elliptic equations with zero boundary condition, we refer the readers to \cite{HL,H3,L1,LWZ2}.

A natural generalization is to consider the solutions of parabolic equations on unbounded domains with boundary condition.  For example, Feng \cite{F} established the parabolic analogs of Hang-Lin's results \cite{HL1}. Namely, Feng proved that, for a class of parabolic equations with zero boundary condition, the dimension of the solutions with exponential growth is finite. Corresponding to the Laplace equation $\Delta u=0$ on $\R\times(0,\pi)$ with zero boundary condition, one gets easily that the heat equation $u_t-\Delta u=0$ on $\R\times(0,\pi)$ with zero boundary condition has a positive solution  $e^{-t}\sin x$, and any positive solution can be presented by this positive solution. To the best of our knowledge, there are no such results for the inhomogeneous parabolic equations with lower order terms. Motivated by the works mentioned above, similar to the case of elliptic equations, in this paper, we would like to study a class of inhomogeneous parabolic equations with zero boundary condition and discuss the structure of solutions. More precisely, we study the following parabolic equations:
\begin{equation}\label{pe1}
\left\{
\begin{array}{rcll}
Lu(y,t) &=& f, \qquad &(y,t)\in Q,\\
u(y,t)&=& 0,  \qquad &(y,t)\in \partial Q, \\
u(y,t)&& \hspace{-8mm}\mbox{is uniformly bounded from below}, \qquad &(y,t)\in Q,
\end{array}
\right.
\end{equation}
where $Q=\Omega\times(-\infty,+\infty)=\{x=(y,t)\in\mathbb{R}^{n+1}|y=(y_{1},y_{2},\cdots,y_{n})\in\Omega,~t\in(-\infty,+\infty)\}$, $\Omega\subset\mathbb{R}^{n}$ is a bounded Lipschitz domain and $0\in\Omega$. Here $L$ is a second order uniformly parabolic differential operator in nondivergence form or divergence form, i.e.
$$
Lu(y,t)=\frac{\partial u}{\partial t}(y,t)-\sum_{i,j=1}^{n}a_{ij}(y,t)\frac{\partial^{2}u(y,t)}{\partial y_{i}\partial y_{j}}+\sum_{i=1}^{n}b_{i}(y,t)\frac{\partial u(y,t)}{\partial y_{i}}+c(y,t)u(y,t),
$$
or
$$
Lu(y,t)=\frac{\partial u}{\partial t}(y,t)-\sum_{i,j=1}^{n}\frac{\partial}{\partial y_{i}}(a_{ij}(y,t)\frac{\partial u(y,t)}{\partial y_{j}})+\sum_{i=1}^{n}b_{i}(y,t)\frac{\partial u(y,t)}{\partial y_{i}}+c(y,t)u(y,t).
$$

Throughout the paper, we only prove the results in the nondivergence form. In this case, we consider the strong solution $u\in W_{n+1,\text{loc}}^{2,1}(Q)\cap C(\bar{Q})$. We always assume that $a_{ij}\in C({\bar{Q}})$ with $a_{ij}(y,t)=a_{ji}(y,t)$ satisfies the uniformly parabolic condition: there exist $\lambda,\Lambda>0$ such that
\begin{eqnarray*}
\lambda|\xi|^{2}\leq a_{ij}(y,t)\xi_{i}\xi_{j}\leq\Lambda|\xi|^{2},\quad (y,t)\in Q,~\xi\in \R^{n},
\end{eqnarray*}
 and $b_{i},~c,~f$ satisfy
\begin{eqnarray*}
b_{i}\in L^{\infty}(Q),\quad\|b_{i}\|_{L^{\infty}(Q)}\leq\Lambda,
\end{eqnarray*}
\begin{eqnarray*}
c\in L^{\infty}(Q),\quad\|c\|_{L^{\infty}(Q)}\leq\Lambda,\quad c\geq0,\quad(y,t)\in Q.
\end{eqnarray*}
\begin{eqnarray*}
f\in L^{n+1}_{\text{loc}}(Q), \quad \|f\|_{L_{*}^{n+1}(Q)}:=\sup_{t\in\mathbb{R}}\|f\|_{L^{n+1}(Q_{(t,t+2)})}<+\infty.
\end{eqnarray*}
If $f=0$, then the problem (\ref{pe1}) turns into the problem
\begin{eqnarray*}
\left\{
\begin{array}{rcll}
Lu(y,t) &=& 0, \qquad &(y,t)\in Q,\\
u(y,t)&=& 0,  \qquad &(y,t)\in \partial Q, \\
u(y,t)&>& 0, \qquad &(y,t)\in Q,
\end{array}
\right.
\end{eqnarray*}
where we have used the maximum principle in $Q$, see Lemma \ref{mpqf} below. We denote by $\tilde{U}$ the solution set of the problem $(\ref{pe1})$. In particular, if $f=0$, we use $U$ to denote the solution set of problem $(\ref{pe1})$, which means that $\tilde{U}=U$ for $f=0$.

To state our results, we first give some notations. For $x_{1}=(y_{1},t_{1}),~x_{2}=(y_{2},t_{2})\in\mathbb{R}^{n+1}$, we define the parabolic distance:
$\text{dist}_{p}(x_{1},x_{2})=\max\{|y_{1}-y_{2}|,|t_{1}-t_{2}|^{\frac{1}{2}}\}.
$
Then the parabolic neighborhood can be defined by
$O_{p}(x_{0},\delta)=\{x\in\mathbb{R}^{n+1}|\text{dist}_{p}(x,x_{0})<\delta\}.
$
Moreover, for any $E\subset\mathbb{R}$, $Q_{E}:=\Omega\times E$, $\partial_{l}Q_{E}:=\partial\Omega\times E=\{(y,t)|y\in\partial\Omega,~t\in E\}$. If $E=(a,b)\subset\mathbb{R}$ for $-\infty<a<b\leq+\infty$, we denote $Q_{(a,b)}:=\Omega\times (a,b)$,  $\partial_{l}Q_{(a,b)}:=\partial\Omega\times(a,b)$, $\partial_{b}Q_{(a,b)}:=\Omega\times\{t=a\}$, $\partial_{c}Q_{(a,b)}:=\partial\Omega\times\{t=a\}$, $\partial_{p}Q_{(a,b)}:=\partial_{l}Q_{(a,b)}\cup\partial_{b}Q_{(a,b)}\cup\partial_{c}Q_{(a,b)}$. For $Q=\Omega\times(-\infty,+\infty)$, $\partial Q=\partial_{l}Q=\partial\Omega\times(-\infty,+\infty)$. For any $t\in\mathbb{R}$, let $Q_{t}:=Q_{\{t\}}$, $Q^{+}_{t}:=Q_{(t,+\infty)}$, $Q^{-}_{t}:=Q_{(-\infty,t)}$, $Q^{+}:=Q_{0}^{+}$, $Q^{-}=Q_{0}^{-}$.

 For $u\in\tilde{U}$, we write $\hat{u}(t):=\sup\limits_{y\in\Omega}u^{+}(y,t)$, $t\in\mathbb{R}$, where $u^{+}=\max\{u,0\}$ and $m(u):=\inf\limits_{t\in\mathbb{R}}\hat{u}(t)$. Clearly, for $u\in U$, $\hat{u}(t)=\sup\limits_{y\in\Omega}u(y,t)$, $t\in\mathbb{R}$.

Now we state our main results. The first one is about the structure of the positive solution set $U$.
\begin{thm}\label{se0}
For the problem (\ref{pe1}) with $f=0$, the positive solution set $U$ is well defined.  Moreover, we have $U=\{u|~u=av,v\in U, a>0\}$.
\end{thm}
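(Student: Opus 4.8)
The plan is to show that the positive solution set $U$ is nonempty, and that any two positive solutions are positive scalar multiples of one another; together these give $U = \{av : v \in U,\ a > 0\}$ for any fixed $v \in U$. The existence part should follow by a standard exhaustion argument: solve the Dirichlet problem $Lu_k = 0$ on the bounded cylinders $Q_{(-k,k)}$ with $u_k = 0$ on $\partial_l Q_{(-k,k)}$ and, say, $u_k = \phi$ on the top and bottom faces for a suitable positive $\phi$ (or, more cleanly, fix a normalization such as $u_k(0,0) = 1$ by rescaling a positive solution on $Q_{(-k,k)}$ produced from a principal eigenfunction). Interior $W^{2,1}_{n+1}$ estimates together with the Harnack-type inequality (already available in the excerpt) give uniform local bounds, so a subsequence converges in $C_{\text{loc}}(\bar Q)$ to a solution $u \ge 0$ on $Q$ vanishing on $\partial Q$; the Harnack inequality then upgrades $u \ge 0$ to $u > 0$ in $Q$ once we know $u \not\equiv 0$, which the normalization guarantees. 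Boundedness from below is automatic since $u > 0$.

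For the one-dimensionality, the main step is: if $u, v \in U$ then $u/v$ is constant. First I would record that, by the boundary Harnack principle for parabolic equations on Lipschitz cylinders (or the Carleson estimate plus interior Harnack), the ratio $u/v$ extends to a bounded, strictly positive function on $\bar Q$ — here one uses that $\Omega$ is Lipschitz and that both $u$ and $v$ vanish continuously on $\partial_l Q$ at the same rate. Set
\[
\alpha = \inf_{Q} \frac{u}{v}, \qquad \beta = \sup_{Q} \frac{u}{v},
\]
both finite and positive. Consider $w = u - \alpha v$. Then $w \in \tilde U$ with $f = 0$ (the equation is linear and homogeneous, $w = 0$ on $\partial Q$, and $w \ge 0$ so it is bounded from below), hence either $w > 0$ in $Q$ or $w \equiv 0$ by the strong maximum principle in $Q$. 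If $w > 0$, I claim $\inf_Q w / v > 0$, which would contradict the definition of $\alpha$: this is exactly where the quantitative boundary Harnack inequality is essential, because near $\partial_l Q$ both $w$ and $v$ tend to zero and one must compare their rates; the inequality gives $w/v \ge c > 0$ uniformly. Symmetrically, applying the same argument to $\beta v - u$ forces $u \equiv \beta v$. Hence $u = \alpha v = \beta v$ and $u/v$ is the constant $\alpha = \beta$.

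The main obstacle is the uniform-in-$t$ control of the ratio $u/v$ up to the lateral boundary $\partial_l Q$: on each finite slab $Q_{(t-1,t+2)}$ the boundary Harnack principle gives a comparison constant, but one needs this constant to be independent of $t$. This is where uniform parabolicity, the uniform bounds $\|b_i\|_{L^\infty}, \|c\|_{L^\infty} \le \Lambda$, and the translation-invariance of the cylinder geometry (the Lipschitz character of $\Omega$ does not depend on $t$) are used, so the boundary Harnack constant can be taken uniform; I would isolate this as the key lemma and cite the parabolic boundary Harnack literature (e.g.\ work of Fabes--Garofalo--Salsa and Salsa) for the fixed-slab version, then patch along $t$. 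Once that uniformity is in hand, the $\inf$/$\sup$ argument above closes the proof; the remaining verifications — that $u - \alpha v$ really lies in the solution class and that the strong maximum principle of Lemma \ref{mpqf} applies — are routine given the linearity of $L$.
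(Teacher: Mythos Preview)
Your one-dimensionality argument is essentially the paper's: take the extremal constant in the comparison $u \le kv$, observe that the difference $Kv-u$ is either identically zero or again an element of $U$, and in the latter case reapply the global comparison to contradict extremality. The paper writes it from the $\sup$ side ($K=\inf\{k>0:u\le kv\}$) rather than your $\inf$ side, but the mechanism is identical.

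The gap is in how you obtain the global two-sided bound $0<\inf_Q u/v \le \sup_Q u/v<\infty$, which is the lemma the whole argument hangs on. Your plan is to invoke the fixed-slab boundary Harnack inequality and then ``patch along $t$,'' but this does not close: on each slab $Q_{(t-1,t+2)}$ the boundary comparison gives $u/v$ comparable to the single number $u(0,t)/v(0,t)$ with a constant independent of $t$, yet that number itself can drift by a fixed multiplicative factor per unit time, so iterating over $|t|\to\infty$ yields no bound. The paper's route (Lemmas~\ref{ct}--\ref{us}) exploits the parabolic direction instead of fighting it. After normalizing $u(0,1)=v(0,1)$, use the elliptic-type Harnack inequality for \emph{ancient} solutions together with the boundary comparison (both from H\'uska--Pol\'a\v{c}ik--Safonov) on a \emph{single} time slice to get $u\le C_* v$ on $\Omega\times\{0\}$; then $u-C_*v\le 0$ on $\partial_p Q^+$, and the maximum principle in $Q^+$ (Lemma~\ref{mpq+f}) propagates this forward to all of $t\ge 0$. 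For $t<0$ no maximum principle runs backward; instead one argues by contradiction: if $u(0,t_0)/v(0,t_0)$ were too large at some $t_0<0$, rerunning the forward argument from $t_0$ would force $u(0,1)/v(0,1)$ to be large, contradicting the normalization. That forward-propagation plus backward-contradiction is the missing idea in your outline; once you have it, your $\alpha=\beta$ argument goes through exactly as written.
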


The following two theorems are about the asymptotic behavior of positive solutions at infinity.
\begin{thm}\label{th2}
For the problem (\ref{pe1}) with $f=0$, there exist constants $\alpha,~\beta,~C,~C'$ depending only on $n,\lambda,\Lambda,\Omega$ such that, for any $u\in U$,
\begin{equation}\label{e1}
C\hat{u}(0)e^{\beta|t|}\leq\hat{u}(t)\leq C'\hat{u}(0)e^{\alpha|t|},\quad t\in(-\infty,0),
\end{equation}
\begin{equation}\label{e2}
\frac{1}{C'}\hat{u}(0)e^{-\alpha t}\leq\hat{u}(t)\leq \frac{1}{C}\hat{u}(0)e^{-\beta t},\quad t\in(0,+\infty).
\end{equation}
\end{thm}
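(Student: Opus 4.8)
The plan is to prove the upper and lower exponential bounds separately, using the one-dimensionality of $U$ from Theorem \ref{se0} together with a compactness argument and the (parabolic) Harnack inequality near $t=0$. First I would set up a fixed reference slab: let $u\in U$ and normalize so that $\hat u(0)=1$; by Theorem \ref{se0} this is harmless since $U$ is a single ray. The key comparison is between the value of $\hat u$ at an integer time $t=k$ and at $t=k\pm 1$. Using the interior and boundary (Lipschitz domain) parabolic Harnack inequality on the slab $Q_{(k-1,k+2)}$ for the positive solution $u$, one obtains a constant $K=K(n,\lambda,\Lambda,\Omega)>1$ with
\begin{equation}\label{harnstep}
K^{-1}\hat u(k)\le \hat u(k+1)\le K\,\hat u(k),\qquad k\in\mathbb Z.
\end{equation}
Iterating \eqref{harnstep} gives $K^{-|k|}\le\hat u(k)\le K^{|k|}$ for all $k\in\mathbb Z$; interpolating over the unit interval between consecutive integers (again by Harnack) upgrades this to the claimed two-sided bound $C\,e^{\beta|t|}\le\hat u(t)\le C'\,e^{\alpha|t|}$ on $(-\infty,0)$, with $\alpha=\log K$ and $\beta$ of the same nature, and the reflected statement on $(0,+\infty)$ follows by reading \eqref{harnstep} in the opposite direction. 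The constants are manifestly independent of $u$ because the Harnack constant depends only on $n,\lambda,\Lambda,\Omega$ and the geometry of the fixed slab, not on $u$ itself.

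There is, however, a gap in the naive version of the above: the Harnack inequality controls $\hat u(k+1)$ by the value of $u$ at \emph{interior} points of the slab at a later time, and to close the loop one needs the two-sided estimate \eqref{harnstep}, i.e. one also needs a \emph{lower} bound on $\hat u(k+1)$ in terms of $\hat u(k)$ going \emph{backward} in time. For the heat equation backward Harnack fails in general, so the actual mechanism must be different. I expect the correct route is the one suggested by the abstract: combine the forward Harnack inequality with the maximum principle on $Q$ (Lemma \ref{mpqf}) and with Theorem \ref{se0}. Concretely, consider the two functions $t\mapsto\hat u(t)$ and its behavior under the time-translation $u_\tau(y,t):=u(y,t+\tau)$, which again solves $Lu=0$ with zero boundary data (here one uses that the estimates sought are qualitatively translation-robust even though $L$ itself is not autonomous — this is where some care is needed, and one likely fixes it by working with the \emph{family} of operators with coefficients in the given bounded class and proving a uniform statement). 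Since any positive solution is a scalar multiple of a fixed one, $u_\tau$ must be proportional to $u$ only in the autonomous case; in the general case one instead shows $\hat u$ is, up to multiplicative constants on unit scales, both submultiplicative and supermultiplicative via forward Harnack and the maximum principle, which is exactly \eqref{harnstep}.

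The main obstacle, then, is establishing the \emph{lower} bound $\hat u(t)\ge C\,\hat u(0)e^{\beta|t|}$ as $t\to-\infty$ (equivalently the non-degenerate exponential \emph{growth} at the $-\infty$ end), since the upper bounds and the forward direction are standard consequences of parabolic Harnack plus the maximum principle. I would handle this by a contradiction-and-compactness argument: if no uniform $\beta>0$ worked, one could find a sequence of solutions (or a sequence of times) along which $\hat u$ decays subexponentially at $-\infty$; rescaling $v_k(y,t)=u(y,t-\tau_k)/\hat u(-\tau_k)$ and passing to a limit (using interior $W^{2,1}_{n+1}$ estimates, the boundary regularity for Lipschitz $\Omega$, and $C(\bar Q)$ convergence on compact sets) produces a positive solution of a limiting equation in the same class, bounded on all of $Q$ or decaying at \emph{both} ends — which, by the structure theorem $U=\{av\}$ and the known behavior of the model profile, is impossible. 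Once both ends and both directions are pinned down, the statement \eqref{e1}–\eqref{e2} is assembled by the elementary iteration described above.
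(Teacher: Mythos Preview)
Your overall iteration scheme is right, but you have misjudged which of the two one-step inequalities is delicate, and your proposed compactness argument is unnecessary.

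The paper establishes the two inequalities
\[
\hat u(t-1)\le (1+\theta)\,\hat u(t)\qquad\text{and}\qquad \hat u(t+1)\le (1-\delta)\,\hat u(t),\qquad t\in\R,
\]
and then iterates exactly as you describe, with $\alpha=\ln(1+\theta)$ and $\beta=-\ln(1-\delta)=\ln(1+\eta)$. Neither Theorem~\ref{se0} nor any compactness is used.

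For the first inequality (which yields the \emph{upper} bound on $(-\infty,0)$ and the lower bound on $(0,+\infty)$), the paper invokes the elliptic-type boundary Harnack inequality for \emph{global} (eternal) positive solutions due to H\'uska--Pol\'a\v cik--Safonov (Lemma~\ref{bhc}): for $u\in U$ one has $u(x)\le C\,u(0,t_0)$ for all $x\in Q_{(t_0-2,t_0+2)}$. This is not ordinary forward parabolic Harnack; it holds because $u$ is defined for all time, and it is exactly what substitutes for the ``backward Harnack'' you were worried about.

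For the second inequality --- the one you flag as the main obstacle --- the argument is completely elementary and already sitting in Section~2. The decay lemma (Corollary~\ref{mpgf} with $f=0$) gives $\hat u(t_0+1)\le (1-\delta)\hat u(t_0)$ directly: one compares $u$ on $Q_{(t_0,t_0+2)}$ with a solution $w$ taking boundary data $\hat u(t_0)$ at the bottom and $0$ laterally, uses boundary H\"older regularity to get $w\le\tfrac12\hat u(t_0)$ near $\partial\Omega$ at $t=t_0+1$, and then weak Harnack for the supersolution $\hat u(t_0)-w$ to push the gain into the interior. This single-slab estimate, iterated, gives $\hat u(t)\le (1-\delta)^{[t]}\hat u(0)$ for $t>0$ and hence $\hat u(t)\ge (1-\delta)^{-[|t|]}\hat u(0)$ for $t<0$. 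No limiting operator, no rescaling sequence, no appeal to the one-dimensionality of $U$.

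In short: your route via compactness and translation could perhaps be pushed through in the autonomous case, but in the non-autonomous setting it runs into the problems you yourself note (the limit solves a \emph{different} equation, so Theorem~\ref{se0} for the original $L$ says nothing about it). The paper avoids all of this by proving the quantitative one-step decay $\hat u(t+1)\le (1-\delta)\hat u(t)$ directly from the maximum principle and boundary regularity on a fixed slab; you should replace your third paragraph by that argument.
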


\begin{thm}\label{th3}
Let $u$ be a solution of the following problem
\begin{equation*}
\left\{
\begin{array}{rcll}
Lu(y,t) &=& 0, \qquad &(y,t)\in Q^{+},\\
u(y,t)&=& 0,  \qquad &(y,t)\in \partial_{l} Q^{+}, \\
u(y,t)&>& 0, \qquad &(y,t)\in Q^{+}.
\end{array}
\right.
\end{equation*}
Then for any $w\in U$, there exist constants $\alpha>0$ depending only on $n,\lambda,\Lambda,\Omega$, and $K,~C>0$ depending only on $u,n,\lambda,\Lambda,\Omega$ such that
$$|u(y,t)-Kw(y,t)|\leq Ce^{-\alpha t}w(y,t),\quad (y,t)\in Q_{(1,+\infty)}.
$$
\end{thm}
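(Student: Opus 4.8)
The plan is to compare the given one-sided solution $u$ on $Q^+$ with a fixed positive solution $w\in U$ (defined on all of $Q$) and show that their ratio converges exponentially fast as $t\to+\infty$. The starting point is that $w$ itself, restricted to $Q^+$, solves the same problem as $u$, so by the one-dimensionality-type arguments underpinning Theorem \ref{se0} together with the boundary Harnack / Carleson-type estimates built into the paper's Harnack machinery, the quotient $u/w$ is well defined and, for each fixed slab $Q_{(s,s+1)}$, is bounded above and below by constants times $\operatorname{osc}$-controlled quantities. Concretely, I would first establish: there exist $c_1,c_2>0$ (depending on $u,w$ and the structural constants) such that $c_1 w(y,t)\le u(y,t)\le c_2 w(y,t)$ for all $(y,t)\in Q_{(1,+\infty)}$. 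This follows by applying the maximum principle in $Q_{(1,+\infty)}$ (Lemma \ref{mpqf}) to $c_2 w - u$ and $u - c_1 w$ after choosing the constants via a comparison on the single slab $Q_{(1,2)}$ and using that both functions vanish on $\partial_l Q^+$; here one uses the boundary Harnack inequality to pass from "bounded ratio of the sup's" to "pointwise bounded ratio" near $\partial\Omega$.

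Next, define $K := \inf\{\,a>0 : a w \ge u \text{ on } Q_{(1,+\infty)}\,\}$ and, symmetrically, $k := \sup\{\,a>0 : a w \le u\,\}$; by the previous step these are finite and positive. The heart of the argument is a decay-of-oscillation iteration for the ratio. Set $M(s) := \sup_{Q_{(s,+\infty)}} u/w$ and $m(s) := \inf_{Q_{(s,+\infty)}} u/w$. Both $M(s)w - u$ and $u - m(s)w$ are nonnegative solutions of $Lv=0$ on $Q_{(s,+\infty)}$ vanishing on the lateral boundary. I would apply a Harnack-type inequality (the interior Harnack in the "bulk" region and a boundary Harnack near $\partial_l Q$, both available from the paper's standing tools used to prove Theorems \ref{th2} and \ref{th3}'s companions) on the slab $Q_{(s+1,s+2)}$ to obtain
$$
\inf_{Q_{(s+2,+\infty)}}\frac{M(s)w-u}{w}\ \ge\ \theta\,\Big(M(s) - m(s+1)\Big)
$$
for some $\theta\in(0,1)$ depending only on $n,\lambda,\Lambda,\Omega$, and likewise for $u-m(s)w$. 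Combining these two inequalities yields the geometric decay $M(s+2)-m(s+2)\le (1-2\theta)\big(M(s)-m(s)\big)$, hence $\operatorname{osc}_{Q_{(s,+\infty)}} (u/w) \le C_0 e^{-\alpha s}\operatorname{osc}_{Q_{(1,+\infty)}}(u/w)$ with $\alpha>0$ depending only on the structural data. Since the nested oscillations shrink to zero, $u/w$ has a limit $K>0$ as $t\to+\infty$, and $|u/w - K|\le \operatorname{osc}_{Q_{(t,+\infty)}}(u/w)\le C e^{-\alpha t}$ on $Q_{(1,+\infty)}$, which is exactly the claimed estimate after multiplying through by $w$.

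The main obstacle I anticipate is making the oscillation-decay step rigorous uniformly up to the lateral boundary $\partial_l Q^+$, where $w$ vanishes and the naive quotient $u/w$ is a $0/0$ expression. This requires a genuine boundary Harnack principle for the operator $L$ on the Lipschitz cylinder — controlling $\sup (u/w)$ and $\inf(u/w)$ in a neighborhood of $\partial\Omega$ by their values in the interior — which is the parabolic analogue of the Carleson estimate. I would invoke this in the same form the paper already uses elsewhere (it is implicit in the proof of Theorem \ref{se0} and the asymptotics of Theorem \ref{th2}); the remaining care is bookkeeping the dependence of constants so that $\alpha$ depends only on $n,\lambda,\Lambda,\Omega$ while $K$ and $C$ are allowed to depend on $u$ (through the initial ratio bounds $c_1,c_2$ on $Q_{(1,2)}$). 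A secondary technical point is the treatment of the lower-order term $c\ge 0$: it only helps in the maximum principle comparisons, so no extra work is needed there, but one must ensure the Harnack inequalities are stated with the correct normalization accounting for $\|b_i\|_{L^\infty},\|c\|_{L^\infty}\le\Lambda$.
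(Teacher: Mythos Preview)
Your proposal is correct and is essentially the same argument as the paper's. The paper also defines, for each $j\in\mathbb{N}_+$, the quantities $K_j=\inf\{k>0:u\le kw\text{ on }Q_{(j,+\infty)}\}$ and $L_j=\sup\{l>0:u\ge lw\text{ on }Q_{(j,+\infty)}\}$ (your $M(j)$ and $m(j)$) and proves the geometric decay $K_{j+1}-L_{j+1}\le\zeta(K_j-L_j)$ via the same boundary Harnack/comparison machinery (Lemmas~\ref{ct}--\ref{us}); the only cosmetic difference is that the paper phrases the improvement step as a midpoint dichotomy at the reference point $(0,j+1)$ and then applies the one-sided comparison (Remark~\ref{ctc}/Lemma~\ref{usc}) in whichever branch holds, whereas you apply the two-sided comparison to both $M(s)w-u$ and $u-m(s)w$ and sum. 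Your identification of the boundary Harnack up to $\partial_l Q^+$ as the essential technical input, and of the resulting dependence of $\alpha$ on $n,\lambda,\Lambda,\Omega$ versus $K,C$ on $u$ as well, matches the paper exactly.
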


Finally, we establish the structure of the solution set  $\tilde{U}$.
\begin{thm}\label{th4}
For the problem $(\ref{pe1})$ with $f\neq 0$, the set of solutions bounded from below $\tilde{U}$ can be represented by, for any $u\in U$,
$$\tilde{U}=U^{0}+U=\{u_{0}+au|a\geq0\},
$$
where $U^{0}=\{u_{0}\}$ is the bounded solution of $Lu_{0}=f$ in $Q$ with zero boundary condition.
\end{thm}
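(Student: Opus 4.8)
The plan is to build the desired bounded solution $u_0$ of $Lu_0 = f$ with zero lateral boundary data, and then show that subtracting it reduces the structure problem for $\tilde U$ to the already-established structure of $U$. First I would construct $u_0$ by an exhaustion argument: for each $j\in\mathbb N$, solve the initial-boundary value problem $Lu_j = f$ in $Q_{(-j,j)}$ with $u_j = 0$ on $\partial_l Q_{(-j,j)}$ and $u_j = 0$ on the bottom $\partial_b Q_{(-j,j)}$; existence and uniqueness of such strong solutions in $W^{2,1}_{n+1}$ is standard given the hypotheses on $a_{ij},b_i,c,f$. The key is a uniform-in-$j$ bound $\|u_j\|_{L^\infty(Q_{(-j+1,j)})}\le C$ depending only on $n,\lambda,\Lambda,\Omega$ and $\|f\|_{L^{n+1}_*(Q)}$. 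This should follow by comparing $u_j$ against a barrier of the form $A\,\varphi(y) + B$ (or a suitable time-translate thereof), where $\varphi>0$ solves an auxiliary elliptic problem on $\Omega$ so that $L$ applied to the barrier dominates $|f|$ locally; the condition $c\ge 0$ is what makes the maximum principle available here. With the uniform bound in hand, interior $W^{2,1}_{n+1}$ estimates plus $C(\bar Q)$ estimates up to the lateral boundary (using the Lipschitz regularity of $\Omega$) give local uniform convergence of a subsequence to a limit $u_0\in W^{2,1}_{n+1,\mathrm{loc}}(Q)\cap C(\bar Q)$ which solves $Lu_0 = f$ in all of $Q$, vanishes on $\partial Q$, and is bounded (hence bounded from below), so $u_0\in\tilde U$.

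Next I would prove uniqueness of the bounded solution, i.e. $U^0 = \{u_0\}$ really is a singleton. If $u_0,u_0'$ are both bounded solutions of the inhomogeneous problem with zero boundary data, then $w := u_0 - u_0'$ solves $Lw = 0$ in $Q$, $w = 0$ on $\partial Q$, and $w$ is bounded (both above and below). The claim is that the only bounded solution of the homogeneous problem is $w\equiv 0$. This is where Theorem \ref{th2} does the work: any nonzero $u\in U$ has $\hat u(t)$ growing exponentially as $t\to-\infty$, so $u$ is unbounded; by the maximum principle in $Q$ (Lemma \ref{mpqf}), a bounded $w$ with zero boundary data and $Lw=0$ cannot be positive unless identically zero, and applying the same reasoning to $\pm w$ after noting that $w^\pm$ are subsolutions forces $w\equiv 0$. (One must be slightly careful: $w$ is signed, so I would argue that $w$ bounded implies $w$ cannot be an element of $U$ or $-U$ nontrivially, and a separate maximum-principle argument on $Q_{(a,b)}$ with $a\to-\infty$, $b\to+\infty$, using boundedness to kill boundary contributions, yields $w\equiv 0$.)

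Finally I would assemble the structure statement. Given any $\tilde u\in\tilde U$, set $w := \tilde u - u_0$. Then $Lw = 0$ in $Q$, $w = 0$ on $\partial Q$, and since $\tilde u$ is bounded below and $u_0$ is bounded, $w$ is bounded below. By the maximum principle in $Q$ there are two cases: either $w\equiv 0$, giving $\tilde u = u_0$ (the $a=0$ case), or $w>0$ in $Q$, so $w\in U$, and then Theorem \ref{se0} gives $w = av$ for the fixed reference solution $v\in U$ and some $a>0$; writing $u = v$ (any fixed element of $U$) we get $\tilde u = u_0 + au$ with $a\ge 0$. Conversely every $u_0 + au$ with $a\ge 0$ solves $Lu = f$, vanishes on $\partial Q$, and is bounded below since $u_0$ is bounded and $au\ge 0$. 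Hence $\tilde U = U^0 + U = \{u_0 + au \mid a\ge 0\}$, and the representation is independent of the choice of $u\in U$ up to rescaling $a$, by Theorem \ref{se0}. The main obstacle is the first step: establishing the $j$-independent $L^\infty$ bound on the approximating solutions $u_j$, since this is exactly where the condition $\|f\|_{L^{n+1}_*(Q)}<\infty$ (rather than merely $f\in L^{n+1}_{\mathrm{loc}}$) must be used, via a barrier/Harnack argument analogous to the elliptic construction in \cite{LWZ}; everything downstream is a fairly direct consequence of Theorems \ref{se0} and \ref{th2} together with Lemma \ref{mpqf}.
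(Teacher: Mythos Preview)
Your overall architecture matches the paper's exactly: construct the bounded solution $u_0$ by exhaustion on $Q_{(-N,N)}$ (this is the paper's Lemma~\ref{sef}), prove its uniqueness, and then subtract $u_0$ from an arbitrary $\tilde u\in\tilde U$ to reduce to the homogeneous case via Lemma~\ref{mpqf} and Theorem~\ref{se0}. The assembly step is identical to the paper's.

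The one place your plan diverges in substance is the mechanism for the $N$-independent bound on the approximants. You propose a spatial barrier $A\varphi(y)+B$ with $\varphi$ solving an elliptic problem on $\Omega$; this is shaky here because the coefficients $a_{ij},b_i,c$ depend on $t$ (so no single $\varphi$ gives $L\varphi\ge 1$ uniformly) and, more seriously, $f$ is only in $L^{n+1}_*$, not $L^\infty$, so ``$L(\text{barrier})\ge |f|$'' has no pointwise meaning. The paper instead uses the one-step decay estimate (Corollary~\ref{mpgf}) on every slab $Q_{(\xi-1,\xi+1)}\subset Q_{(-N,N)}$: setting $M=\|u_N\|_{L^\infty(Q_{(-N,N)})}$, one gets $\hat u_N(\xi)\le (1-\delta)M + C\|f\|_{L^{n+1}_*}$ for all such $\xi$, and after handling the two end-slabs by the standard ABP bound this yields $M\le (1-\delta)M + C\|f\|_{L^{n+1}_*}$, hence $M\le C\delta^{-1}\|f\|_{L^{n+1}_*}$. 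This self-improving inequality is the clean replacement for your barrier and is precisely where the $L^{n+1}_*$ condition enters. For uniqueness of $u_0$ the paper also argues more directly than you: if $w=u_0-u_0'$ is bounded with $Lw=0$ and $w=0$ on $\partial Q$, apply Lemma~\ref{mpqf} to $w$ and to $-w$ to get $w\equiv 0$; the detour through Theorem~\ref{th2} is unnecessary.
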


This paper is organized as follows. In Section 2, we mainly prove a maximum principle in $Q$. In Section 3, we establish a Harnack inequality and a comparison theorem in our form. In Section 4, we show the structure and asymptotic behavior of positive solutions(Theorem \ref{se0}-Theorem \ref{th3}). In Section 5, we demonstrate the structure of solutions bounded from below(Theorem \ref{th4}).

\section{Maximum principle}
In this section, we are devoted to proving the maximum principle in $Q$. First, we introduce a decay lemma, which plays a key role in our proof.
\begin{lm}\label{mp02f}
There are constants $0<\delta<1$ and $0<\varepsilon_{0}<1$ such that if $u$ satisfies
\begin{eqnarray*}
\left\{
\begin{array}{rcll}
Lu(x) &\leq& f(x), \qquad &x\in Q_{(0,2)},\\
u(x)&\leq& 0,  \qquad &x\in\partial_{l}Q_{(0,2)}, \\
u(x)&\leq& 1, \qquad &x\in\partial_{p}Q_{(0,2)}\backslash\partial_{l}Q_{(0,2)},
\end{array}
\right.
\end{eqnarray*}
where $\|f\|_{L^{n+1}(Q_{(0,2)})}\leq\varepsilon_{0}$, then
$$u(y,1)\leq1-\delta, \quad y\in\Omega,
$$
where $\delta$ depends only on $n,\lambda,\Lambda$ and $\Omega$.
\end{lm}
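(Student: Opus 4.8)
The plan is to reduce the inhomogeneous differential inequality to the homogeneous Dirichlet problem via the parabolic Aleksandrov--Bakelman--Pucci (ABP) estimate, and then to prove a \emph{quantitative} strong maximum principle for that homogeneous problem by combining a uniform boundary decay estimate with the classical interior parabolic Harnack inequality. Let $w$ solve $Lw=0$ in $Q_{(0,2)}$ with $w=1$ on $\Omega\times\{0\}$ and $w=0$ on $\partial_lQ_{(0,2)}$; the weak maximum principle gives $0\le w\le1$ in $Q_{(0,2)}$. Since $L(u-w)\le f$ in $Q_{(0,2)}$ and $u-w\le0$ on $\partial_pQ_{(0,2)}$ (here $u\le1$ on $\Omega\times\{0\}$, while $u\le0$ on $\partial_lQ_{(0,2)}$ extends by continuity to the corner $\partial_cQ_{(0,2)}$), the parabolic ABP estimate yields $u\le w+C_0\|f\|_{L^{n+1}(Q_{(0,2)})}\le w+C_0\varepsilon_0$ in $Q_{(0,2)}$, with $C_0=C_0(n,\lambda,\Lambda,\Omega)$. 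Hence it is enough to find $\delta_0=\delta_0(n,\lambda,\Lambda,\Omega)\in(0,1/2)$ with $w(y,1)\le1-2\delta_0$ for all $y\in\Omega$: one then takes $\varepsilon_0:=\delta_0/C_0$ and $\delta:=\delta_0$, so that $u(y,1)\le1-2\delta_0+\delta_0=1-\delta_0$.

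To estimate $w$, set $v:=1-w$, so that $v\ge0$, $Lv=c\ge0$ in $Q_{(0,2)}$ (thus $v$ is a nonnegative supersolution) and $v=1$ on $\partial_lQ_{(0,2)}=\partial\Omega\times(0,2)$. Since $\Omega$ is a bounded Lipschitz domain it satisfies a uniform exterior cone condition, so the boundary Hölder estimate for non-divergence parabolic equations applies at any point $(y,t)$ with $t\ge1/2$: at such a point the parabolic distance to $\Omega\times\{0\}$ exceeds $1/2$, so inside a parabolic neighbourhood of controlled size only the vanishing lateral data of $w$ is felt, giving $w(y,t)\le C_1\,\mathrm{dist}(y,\partial\Omega)^{\gamma}$ whenever $\mathrm{dist}(y,\partial\Omega)$ is small, with $\gamma\in(0,1)$ and $C_1$ depending only on $n,\lambda,\Lambda,\Omega$. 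Consequently there is $\rho_0=\rho_0(n,\lambda,\Lambda,\Omega)>0$ with $v\ge1/2$ on the collar $\mathcal C:=\{y\in\Omega:\mathrm{dist}(y,\partial\Omega)\le\rho_0\}\times[1/2,2)$.

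Finally I propagate this bound inward. If $\mathrm{dist}(y,\partial\Omega)\le\rho_0$ then $(y,1)\in\mathcal C$, so $v(y,1)\ge1/2$. Otherwise $y$ belongs to the compact set $\{y\in\Omega:\mathrm{dist}(y,\partial\Omega)\ge\rho_0\}$, and I join $(y,1)$ to a fixed parabolic cube $Q_0\subset\mathcal C$ on which $v\ge1/2$ by a standard parabolic Harnack chain lying in $\{\mathrm{dist}(\cdot,\partial\Omega)\ge\rho_0/2\}\times(0,2)$ and marching forward in time from $t=1/2$ to $t=1$, of length $N=N(n,\lambda,\Lambda,\Omega)$. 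Applying the interior parabolic Harnack inequality for nonnegative supersolutions along this chain gives $v(y,1)\ge c_2^{\,N}/2=:2\delta_0>0$, where $c_2\in(0,1)$ is the Harnack constant, depending only on $n,\lambda,\Lambda$. Thus $w(y,1)=1-v(y,1)\le1-2\delta_0$ for all $y\in\Omega$, and the reduction of the first paragraph completes the proof.

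The crux is that each constant above --- $C_0$ from ABP, $\gamma,C_1,\rho_0$ from the boundary estimate, and $c_2,N$ from the Harnack chain --- depends only on $n,\lambda,\Lambda$ and the geometry of $\Omega$, uniformly in the coefficients $a_{ij},b_i,c$ and in $u$; this uniformity is precisely what makes $\varepsilon_0$ and $\delta$ the universal constants claimed. (Alternatively, $w(y,1)\le1-2\delta_0$ may be obtained by compactness: if it failed there would be operators $L_k$ as above and solutions $w_k$ with $\sup_\Omega w_k(\cdot,1)\to1$; the uniform boundary estimate keeps the near-maximizers in a fixed compact subset of $\Omega$, Krylov--Safonov interior estimates give a locally uniform subsequential limit $w_\infty$ with $0\le w_\infty\le1$ attaining $1$ at an interior point of $\Omega\times\{1\}$ while vanishing on $\partial_lQ_{(0,2)}$, and, $w_\infty$ being an $L^{n+1}$-viscosity subsolution of a Pucci extremal parabolic inequality, the strong maximum principle forces $w_\infty\equiv1$ on $\Omega\times(0,1]$ --- a contradiction.)
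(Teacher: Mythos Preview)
Your proof is correct and follows essentially the same route as the paper: dominate $u$ by an auxiliary $w$, use the boundary H\"older estimate to get $1-w\ge\tfrac12$ on a collar near $\partial_l Q_{(0,2)}$ for $t\ge\tfrac12$, and then propagate this lower bound on the nonnegative supersolution $1-w$ inward via the weak Harnack inequality. The only cosmetic differences are that you solve $Lw=0$ with idealized data and absorb $f$ once via parabolic ABP (the paper takes $Lw=f$ with $w=u^{+}$ on $\partial_p Q_{(0,2)}$ and carries the extra $C\varepsilon_0$ throughout), and you phrase the interior propagation as a Harnack chain where the paper invokes Gruber's weak Harnack inequality in a single global application; your parenthetical compactness argument is a genuinely alternative route not present in the paper.
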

\begin{proof}
Let $w(x)$ be a solution of
\begin{eqnarray*}
\left\{
\begin{array}{rcll}
Lw(x)&=& f(x), \qquad &x\in Q_{(0,2)}, \\
 w(x)&=& \mbox{max}\{u(x), 0\},\qquad &x\in\partial_{p}Q_{(0,2)}.
\end{array}
\right.
\end{eqnarray*}
By maximum principle in \cite[Theorem 5.1]{Li}, we have
$$u(x)\leq w(x)\leq1+C\|f\|_{L^{n+1}(Q(0,2))}\leq 1+C\varepsilon_{0},\quad x\in Q_{(0,2)},
$$
where $C$ depends only on $n,\lambda,\Lambda,\Omega$. By the boundary H\"{o}lder estimate in \cite[Theorem 5.1]{Gr}, there exist constants $C_{0}>0$ and $0<\alpha<1$ depending only on $n,\lambda,\Lambda,\Omega$ such that
$$[w]_{C^{\alpha}(Q_{(\frac{1}{2},\frac{3}{2})})}\leq C_{0}.
$$
Since $w(x)=0$ on $\partial_{l}Q_{(0,2)}$, it follows that for any $x=(y,t)\in Q_{(\frac{1}{2},\frac{3}{2})}$,
$$|w(x)|\leq C_{0}\text{dist}^{\alpha}(x,\partial_{l}Q_{(\frac{1}{2},\frac{3}{2})})
=C_{0}\text{dist}^{\alpha}(y,\partial\Omega).
$$
We take $\sigma_{0}$ small enough such that $C_{0}\sigma_{0}^{\alpha}\leq\frac{1}{2}$, then for any $x=(y,t)\in Q_{(\frac{1}{2},\frac{3}{2})}$ with $\text{dist}(y,\partial\Omega)\leq \sigma_{0}$, we have
$$|w(x)|\leq C_{0}\sigma_{0}^{\alpha}\leq\frac{1}{2}, \quad x\in Q_{(\frac{1}{2},\frac{3}{2})}.
$$
Let $Q'_{(\frac{1}{2},\frac{5}{4})}=\{x=(y,t)\in Q_{(\frac{1}{2},\frac{5}{4})}|\text{dist}(y,\partial\Omega)> \frac{1}{2} \sigma_{0}\}$, $Q'_{(\frac{4}{3},\frac{3}{2})}=\{x=(y,t)\in Q_{(\frac{4}{3},\frac{3}{2})}|\text{dist}(y,\partial\Omega)> \frac{1}{2}\sigma_{0}\}$ and $Q''_{(\frac{4}{3},\frac{3}{2})}=\{x=(y,t)\in Q_{(\frac{4}{3},\frac{3}{2})}|\sigma_{0}>\text{dist}(y,\partial\Omega)> \frac{1}{2}\sigma_{0}\}$. Clearly we have that $1+C\varepsilon_{0}-w$ is nonnegative and satisfies $L(1+C\varepsilon_{0}-w)=-f(x)+c(x)(1+C\varepsilon_{0})\geq -f(x)$ in $Q_{(0,2)}$. This means that $1+C\varepsilon_{0}-w$ is a nonnegative supersolution of $Lu=-f$ in $Q_{(0,2)}$. Moreover, $\frac{1}{2}+C\varepsilon_{0}\leq 1+C\varepsilon_{0}-w\leq\frac{3}{2}+C\varepsilon_{0}$ in the set $\{x=(y,t)\in Q_{(\frac{1}{2},\frac{3}{2})}|\text{dist}(y,\partial\Omega)\leq \sigma_{0}\}$. Then we apply the weak Harnack inequality in \cite[Theorem 3.1]{Gr} to $1+C\varepsilon_{0}-w$ in $Q_{(\frac{1}{2},\frac{3}{2})}$, and we obtain that for some $p>0$,
\begin{eqnarray*}
\frac{1}{2}C_{1}\sigma_{0}^{n}\leq C_{1}\sigma_{0}^{n}\left(\frac{1}{2}+C\varepsilon_{0}\right)&\leq &\left\{\frac 1{|Q'_{(\frac{4}{3},\frac{3}{2})}|}\int_{Q''_{(\frac{4}{3},\frac{3}{2})}}
(1+C\varepsilon_{0}-w)^{p}dx\right\}^{\frac{1}{p}}\\
&\leq &\left\{\frac 1{|Q'_{(\frac{4}{3},\frac{3}{2})}|}\int_{Q'_{(\frac{4}{3},\frac{3}{2})}}
(1+C\varepsilon_{0}-w)^{p}dx\right\}^{\frac{1}{p}}\\&\leq & C\left\{\inf\limits_{Q'_{(\frac{1}{2},\frac{5}{4})}}(1+C\varepsilon_{0}-w)+\|f\|_{L^{n+1}(Q_{(\frac{1}{2},\frac{3}{2})})}\right\}\\
&\leq& C\left\{\inf\limits_{Q'_{(\frac{1}{2},\frac{5}{4})}}(1+C\varepsilon_{0}-w)+\varepsilon_{0})\right\},
\end{eqnarray*}
where $C_{1}$ is a constant depending only on $n$ and $\Omega$. Therefore, by taking $\varepsilon_{0}\leq\frac{C_{1}\sigma_{0}^{n}}{4C(1+C)}$, for $x=(y,t)\in Q_{(\frac{1}{2},\frac{5}{4})}$ with $\text{dist}(y,\partial\Omega)> \frac{1}{2} \sigma_{0}$, we have $1-w(y,t)\geq \frac{C_{1}\sigma_{0}^{n}}{2C}-(C+1)\varepsilon_{0}\geq\frac{C_{1}\sigma_{0}^{n}}{4C}>0$. Noting that for $x=(y,t)\in Q_{(\frac{1}{2},\frac{3}{2})}$ with $\text{dist}(y,\partial\Omega)\leq \sigma_{0}$, $1-w(y,t)\geq \frac{1}{2}$. Let $\delta=\min\{\frac{1}{2},\frac{C_{1}\sigma_{0}^{n}}{4C}\}$, we obtain that
$$u(y,1)\leq w(y,1)\leq 1-\delta, \quad y\in\Omega.
$$
\end{proof}
\begin{rem}\label{rem3.2f}
In fact, in the proof of Lemma $\ref{mp02f}$,  we can get that
$$\hat{u}(1)=\sup_{y\in\Omega}\{u(y,1),0\}\leq 1-\delta.
$$
\end{rem}
\begin{crl}\label{cormp02f}
There exist constant $0<\delta<1$ and $0<\varepsilon_{0}<1$ such that if $u$ satisfies
\begin{eqnarray*}
\left\{
\begin{array}{rcll}
Lu(x) &\leq& f(x), \qquad &x\in Q_{(0,2)},\\
u(x)&\leq& 0,  \qquad &x\in\partial_{l}Q_{(0,2)},
\end{array}
\right.
\end{eqnarray*}
then
$$\hat{u}(1)\leq(1-\delta)\hat{u}(0)+\frac{1-\delta}{\varepsilon_{0}}\|f\|_{L^{n+1}(Q_{(0,2)})},
$$
where $\delta$ depends only on $n,\lambda,\Lambda$ and $\Omega$.
\end{crl}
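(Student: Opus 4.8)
The plan is to reduce to Lemma \ref{mp02f} by a rescaling/normalization argument. Given $u$ satisfying $Lu\leq f$ in $Q_{(0,2)}$ and $u\leq 0$ on $\partial_l Q_{(0,2)}$, set
$$
M:=\hat u(0)+\frac{1}{\varepsilon_0}\|f\|_{L^{n+1}(Q_{(0,2)})},
$$
and define $v:=u/M$ (if $M=0$ then $u\leq 0$ on all of $\partial_p Q_{(0,2)}$ and $f\leq 0$ a.e., so the maximum principle gives $\hat u(1)\leq 0$ and the inequality is trivial; so assume $M>0$). Then $v$ satisfies $Lv\leq f/M=:g$ in $Q_{(0,2)}$, with $\|g\|_{L^{n+1}(Q_{(0,2)})}=\|f\|_{L^{n+1}(Q_{(0,2)})}/M\leq\varepsilon_0$ by the definition of $M$. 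Moreover $v\leq 0$ on $\partial_l Q_{(0,2)}$, and on the bottom $\partial_b Q_{(0,2)}=\Omega\times\{t=0\}$ we have $v(y,0)=u(y,0)/M\leq \hat u(0)/M\leq 1$; combining, $v\leq 1$ on $\partial_p Q_{(0,2)}\setminus\partial_l Q_{(0,2)}$ (this last set is exactly the bottom face, since for the forward cylinder $Q_{(0,2)}$ the parabolic boundary is $\partial_l Q\cup\partial_b Q\cup\partial_c Q$ and $\partial_c Q\subset\partial_l Q$).

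Now $v$ satisfies all the hypotheses of Lemma \ref{mp02f} with right-hand side $g$, so by that lemma together with Remark \ref{rem3.2f},
$$
\hat v(1)=\sup_{y\in\Omega}\{v(y,1),0\}\leq 1-\delta,
$$
with $\delta=\delta(n,\lambda,\Lambda,\Omega)$. Multiplying back through by $M$ and using $\widehat{(Mv)}(1)=M\hat v(1)$ (valid since $M>0$), we obtain
$$
\hat u(1)\leq (1-\delta)M=(1-\delta)\hat u(0)+\frac{1-\delta}{\varepsilon_0}\|f\|_{L^{n+1}(Q_{(0,2)})},
$$
which is the claim. The constants $\delta,\varepsilon_0$ are inherited from Lemma \ref{mp02f} and depend only on $n,\lambda,\Lambda,\Omega$.

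The only genuinely delicate point is the degenerate case $M=0$, i.e.\ $\hat u(0)=0$ and $f\equiv 0$ a.e.\ on $Q_{(0,2)}$: there one cannot divide, but then $u\leq 0$ on the entire parabolic boundary $\partial_p Q_{(0,2)}$ and $Lu\leq 0$, so the classical maximum principle (e.g.\ \cite[Theorem 5.1]{Li}) forces $u\leq 0$ in $Q_{(0,2)}$, hence $\hat u(1)=0$ and the inequality holds with both sides zero. Everything else is just the scale-invariance of the operator $L$ (the leading, first-order, and zeroth-order structural bounds are unaffected by multiplying the solution by a positive constant) and the homogeneity of the $L^{n+1}$ norm; no new estimates are needed beyond Lemma \ref{mp02f}.
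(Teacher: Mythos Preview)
Your proof is correct and follows essentially the same normalization argument as the paper: the paper also divides by $M=\hat u(0)+\varepsilon_0^{-1}\|f\|_{L^{n+1}(Q_{(0,2)})}$ (written there as $\tilde u=\varepsilon_0 u/(\varepsilon_0\hat u(0)+\|f\|_{L^{n+1}})$) and invokes Lemma~\ref{mp02f} with Remark~\ref{rem3.2f}, treating the degenerate case separately; the only cosmetic difference is that the paper splits into the cases $f=0$ versus $f\neq 0$ rather than $M=0$ versus $M>0$. One small slip worth fixing: with the paper's conventions $\partial_c Q_{(0,2)}=\partial\Omega\times\{0\}$ is \emph{not} contained in $\partial_l Q_{(0,2)}=\partial\Omega\times(0,2)$, but on that corner set $u\leq 0$ by continuity from the lateral boundary, so $v\leq 0\leq 1$ there and your conclusion is unaffected.
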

\begin{proof}We divide the proof into two cases.

\
\

{\bf Case 1: $f=0$ in $Q_{(0,2)}$.}
By the definition of $\hat{u}$, $\hat{u}$ is a nonnegative function, naturally $\hat{u}(0)\geq0$. If $\hat{u}(0)=0$, it follows that $u\leq0$ on $\partial_{b}Q_{(0,2)}\cup\partial_{c}Q_{(0,2)}$, combining with $u\leq0$ on $\partial_{l}Q_{(0,2)}$, we have $u\leq0$ on $\partial_{p}Q_{(0,2)}$. Then by maximum principle, $u\leq0$ in $Q_{(0,2)}$. Hence $\hat{u}(1)\leq0=\hat{u}(0)$.

If $\hat{u}(0)>0$, we consider the function $\tilde{u}(x)=\frac{u(x)}{\hat{u}(0)}$. It is easy to verify that $\tilde{u}(x)$ satisfies
\begin{eqnarray*}
\left\{
\begin{array}{rcll}
L\tilde{u}(x) &\leq& 0, \qquad &x\in Q_{(0,2)},\\
\tilde{u}(x)&\leq& 0,  \qquad &x\in\partial_{l}Q_{(0,2)}, \\
\tilde{u}(x)&\leq& 1, \qquad &x\in\partial_{p}Q_{(0,2)}\backslash\partial_{l}Q_{(0,2)}.
\end{array}
\right.
\end{eqnarray*}
Then we can apply Lemma $\ref{mp02f}$ and Remark $\ref{rem3.2f}$ to $\tilde{u}(x)$, it follows that
$$\hat{u}(1)\leq(1-\delta)\hat{u}(0).
$$

\
\

{\bf Case 2: $f\neq0$ in $Q_{(0,2)}$.}
We consider the function
\begin{eqnarray*}
\tilde{u}(x)=\frac{\varepsilon_{0}u(x)}{\varepsilon_{0}\hat{u}(0)+\|f\|_{L^{n+1}(Q_{(0,2)})}},
~~ x\in Q_{(0,2)}.
\end{eqnarray*}
Obviously, $\tilde{u}(x)$ satisfies
\begin{eqnarray*}
\left\{
\begin{array}{rcll}
L\tilde{u}(x) &\leq& \tilde{f}(x), \qquad &x\in Q_{(0,2)},\\
\tilde{u}(x)&\leq& 0,  \qquad &x\in\partial_{l} Q_{(0,2)}, \\
\tilde{u}(x)&\leq& 1, \qquad &x\in\partial_{p} Q_{(0,2)}\backslash\partial_{l} Q_{(0,2)},
\end{array}
\right.
\end{eqnarray*}
where
\begin{eqnarray*}
\tilde{f}(x)=\frac{\varepsilon_{0}f(x)}{\varepsilon_{0}\hat{u}(0)+\|f\|_{L^{n+1}(Q_{(0,2)})}},
~~ x\in Q,
\end{eqnarray*}
and satisfies $\|\tilde{f}\|_{L^{n+1}(Q_{(0,2)})}\leq\varepsilon_{0}$. For $\tilde{u}(x)$ in $Q_{(0,2)}$, by Lemma $\ref{mp02f}$ and Remark $\ref{rem3.2f}$, we get that there exists a constant $\delta\in(0,1)$ such that $\hat{\tilde{u}}(1)\leq1-\delta$, i.e.
\begin{eqnarray*}
\hat{u}(1)&\leq&\frac{(1-\delta)}{\varepsilon_{0}}\{\varepsilon_{0}\hat{u}(0)
+\|f\|_{L^{n+1}(Q_{(0,2)})}\}\\
&= &(1-\delta)\hat{u}(0)+\frac{(1-\delta)}{\varepsilon_{0}}
\|f\|_{L^{n+1}(Q_{(0,2)})}.
\end{eqnarray*}

Combining the above two cases, we finish the proof.
\end{proof}

In fact, for any $t_{0}\in(-\infty,+\infty)$, since the diameter of $Q_{(t_{0},t_{0}+2)}$ depends only on $n$ and $\text{diam}(\Omega)$, i.e. $\text{diam}(Q_{(t_{0},t_{0}+2)})$ is independent of $t_{0}$, we can give a general version of Lemma $\ref{mp02f}$ and Corollary $\ref{cormp02f}$.
\begin{crl}\label{mpgf}
There exist constant $0<\delta<1$ and $0<\varepsilon_{0}<1$ such that for any $t_{0}\in(-\infty,+\infty)$, if $u$ satisfies
\begin{eqnarray*}
\left\{
\begin{array}{rcll}
Lu(x) &\leq& f(x), \qquad &x\in Q_{(t_{0},t_{0}+2)},\\
u(x)&\leq& 0,  \qquad &x\in\partial_{l}Q_{(t_{0},t_{0}+2)},
\end{array}
\right.
\end{eqnarray*}
then
$$\hat{u}(t_{0}+1)\leq(1-\delta)\hat{u}(t_{0})+\frac{1-\delta}{\varepsilon_{0}}\|f\|_{L^{n+1}(Q_{(t_{0},t_{0}+2)})},
$$
where $\delta$ depends only on $n,\lambda,\Lambda$ and $\Omega$.
\end{crl}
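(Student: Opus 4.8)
The statement is nothing but a time translation of Corollary \ref{cormp02f}, so the plan is to reduce to that corollary by shifting the time variable and then to check that every constant produced there is insensitive to the shift. First I would fix $t_{0}\in\mathbb{R}$ and set $v(y,s):=u(y,s+t_{0})$ for $(y,s)\in Q_{(0,2)}$, together with the shifted data $\tilde a_{ij}(y,s):=a_{ij}(y,s+t_{0})$, $\tilde b_{i}(y,s):=b_{i}(y,s+t_{0})$, $\tilde c(y,s):=c(y,s+t_{0})$ and $\tilde f(y,s):=f(y,s+t_{0})$. Since the translation $(y,s)\mapsto(y,s+t_{0})$ is a measure-preserving diffeomorphism of $Q_{(0,2)}$ onto $Q_{(t_{0},t_{0}+2)}$ fixing the $y$-variable, we have $v\in W^{2,1}_{n+1,\mathrm{loc}}(Q_{(0,2)})\cap C(\overline{Q_{(0,2)}})$, and a direct computation gives $\tilde L v=\tilde f$ a.e.\ in $Q_{(0,2)}$, where $\tilde L$ is the nondivergence operator with coefficients $\tilde a_{ij},\tilde b_{i},\tilde c$; moreover $v\le 0$ on $\partial_{l}Q_{(0,2)}$ because $u\le 0$ on $\partial_{l}Q_{(t_{0},t_{0}+2)}$.

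Next I would record that $\tilde L$ and $\tilde f$ satisfy exactly the same structural hypotheses as $L$ and $f$, with the same constants $\lambda,\Lambda$ and the same domain $\Omega$: indeed $\tilde a_{ij}=\tilde a_{ji}\in C(\overline{Q_{(0,2)}})$ with $\lambda|\xi|^{2}\le\tilde a_{ij}\xi_{i}\xi_{j}\le\Lambda|\xi|^{2}$, $\|\tilde b_{i}\|_{L^{\infty}}\le\Lambda$, $\tilde c\ge0$ with $\|\tilde c\|_{L^{\infty}}\le\Lambda$, and by the change of variables $\|\tilde f\|_{L^{n+1}(Q_{(0,2)})}=\|f\|_{L^{n+1}(Q_{(t_{0},t_{0}+2)})}$. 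Applying Corollary \ref{cormp02f} to $v$ (with operator $\tilde L$ and right-hand side $\tilde f$) then yields $\hat v(1)\le(1-\delta)\hat v(0)+\frac{1-\delta}{\varepsilon_{0}}\|\tilde f\|_{L^{n+1}(Q_{(0,2)})}$ with $\delta,\varepsilon_{0}$ depending only on $n,\lambda,\Lambda,\Omega$. Since $\hat v(s)=\sup_{y\in\Omega}\max\{v(y,s),0\}=\sup_{y\in\Omega}\max\{u(y,s+t_{0}),0\}=\hat u(s+t_{0})$ for every $s$, translating back turns this into the asserted inequality for $\hat u$ at $t_{0}+1$.

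The only point that requires care — and it is really the content of the statement — is that the constants $\delta$ and $\varepsilon_{0}$ furnished by Lemma \ref{mp02f} and Corollary \ref{cormp02f} genuinely depend only on $n,\lambda,\Lambda,\Omega$ and not on $t_{0}$. This is exactly what the remark preceding the corollary isolates: the ingredients of the proof of Lemma \ref{mp02f} — the maximum principle of \cite[Theorem 5.1]{Li}, the boundary Hölder estimate of \cite[Theorem 5.1]{Gr}, and the weak Harnack inequality of \cite[Theorem 3.1]{Gr} — all have constants controlled by $n,\lambda,\Lambda$ and $\mathrm{diam}_{p}(Q_{(t_{0},t_{0}+2)})$, and in the parabolic distance $\mathrm{diam}_{p}(Q_{(t_{0},t_{0}+2)})=\max\{\mathrm{diam}(\Omega),\sqrt 2\}$ is plainly independent of $t_{0}$. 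Hence the same pair $\delta,\varepsilon_{0}$ works for every time window of length $2$, so no genuine obstacle appears beyond this bookkeeping; equivalently, one may simply re-run the proofs of Lemma \ref{mp02f} and Corollary \ref{cormp02f} verbatim on $Q_{(t_{0},t_{0}+2)}$ in place of $Q_{(0,2)}$, every estimate going through word for word because all the cited estimates are translation invariant in $t$.
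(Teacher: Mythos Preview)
Your proposal is correct and matches the paper's own justification, which consists solely of the sentence preceding the corollary observing that $\mathrm{diam}(Q_{(t_{0},t_{0}+2)})$ depends only on $n$ and $\mathrm{diam}(\Omega)$ and is independent of $t_{0}$; your time-translation argument and verification that the constants in Lemma~\ref{mp02f} and Corollary~\ref{cormp02f} are $t_{0}$-independent is exactly the content of that remark, spelled out in full. One small slip: from $Lu\le f$ you get $\tilde L v\le\tilde f$, not equality, but this is what Corollary~\ref{cormp02f} requires anyway.
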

\begin{lm}[Maximum principle in $Q$]\label{mpqf}
Let $u$ satisfy $Lu(x)\leq f(x)$ for $x\in Q$. If $u(x)$ is bounded from above, then we have
$$\sup_{x\in Q}u^{+}(x)\leq\sup_{x\in\partial Q}u^{+}(x)+C\|f\|_{L^{n+1}_{*}(Q)},
$$
where $C$ depends only on $n,\lambda,\Lambda,\Omega$.
\end{lm}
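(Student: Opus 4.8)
The plan is to reduce to the case of zero boundary data and then iterate Corollary \ref{mpgf} backwards in time. First I would dispose of the trivial case: if $M:=\sup_{x\in\partial Q}u^{+}(x)=+\infty$ there is nothing to prove, so assume $M<+\infty$. Since $c\geq 0$ and $M\geq 0$, the function $v:=u-M$ satisfies $Lv=Lu-cM\leq f$ in $Q$, satisfies $v\leq 0$ on $\partial Q=\partial_{l}Q$, and is still bounded from above. Because $u^{+}\leq v^{+}+M$ pointwise, it suffices to prove $\sup_{Q}v^{+}\leq C\|f\|_{L^{n+1}_{*}(Q)}$ for functions $v$ with zero boundary data that are bounded from above.

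Next I would record that boundedness from above makes $\hat v(t)=\sup_{y\in\Omega}v^{+}(y,t)$ finite and uniformly bounded, say $\hat v(t)\leq N$ for all $t\in\mathbb{R}$. For every $t_{0}\in\mathbb{R}$ the pair $(v,f)$ satisfies the hypotheses of Corollary \ref{mpgf} on the strip $Q_{(t_{0},t_{0}+2)}$, and since $\|f\|_{L^{n+1}(Q_{(t_{0},t_{0}+2)})}\leq\|f\|_{L^{n+1}_{*}(Q)}$, the corollary gives the recursive bound
$$\hat v(t_{0}+1)\leq(1-\delta)\,\hat v(t_{0})+\frac{1-\delta}{\varepsilon_{0}}\|f\|_{L^{n+1}_{*}(Q)}=:(1-\delta)\,\hat v(t_{0})+b,\qquad t_{0}\in\mathbb{R},$$
with $\delta,\varepsilon_{0}\in(0,1)$ the constants of Corollary \ref{mpgf}, both depending only on $n,\lambda,\Lambda,\Omega$.

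The main step is the iteration. Fixing $t\in\mathbb{R}$ and applying the recursion $k$ times in unit steps,
$$\hat v(t)\leq(1-\delta)^{k}\,\hat v(t-k)+b\sum_{j=0}^{k-1}(1-\delta)^{j}\leq(1-\delta)^{k}N+\frac{b}{\delta}.$$
Letting $k\to\infty$, the term $(1-\delta)^{k}N$ vanishes because $0<\delta<1$ and $N<\infty$, so $\hat v(t)\leq b/\delta$ for every $t$, whence $\sup_{Q}v^{+}=\sup_{t\in\mathbb{R}}\hat v(t)\leq b/\delta=\frac{1-\delta}{\delta\varepsilon_{0}}\|f\|_{L^{n+1}_{*}(Q)}$. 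Setting $C=\frac{1-\delta}{\delta\varepsilon_{0}}$ and undoing the reduction ($u^{+}\leq v^{+}+M$) yields $\sup_{Q}u^{+}\leq M+C\|f\|_{L^{n+1}_{*}(Q)}$, as claimed.

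I do not expect a serious obstacle once Corollary \ref{mpgf} is in hand; the two points needing care are (i) the reduction to zero boundary data, which is precisely where the sign condition $c\geq 0$ is used (subtracting the nonnegative constant $M$ preserves $Lv\leq f$), and (ii) the essential use of the hypothesis that $u$ is bounded from above — it is exactly what forces $(1-\delta)^{k}\hat v(t-k)\to 0$, and without it the conclusion genuinely fails, as the exponentially growing solutions of $Lu=0$ constructed later in the paper show.
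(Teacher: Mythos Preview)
Your proof is correct and follows essentially the same strategy as the paper's: reduce to zero lateral boundary data by subtracting the nonnegative constant $M$ (using $c\geq 0$), then iterate Corollary~\ref{mpgf} backwards in unit time steps and let the geometric factor $(1-\delta)^{k}$ kill the a priori bound coming from ``$u$ bounded from above.'' The only cosmetic difference is that the paper iterates at integer times and then invokes the classical ABP maximum principle on each strip $Q_{[k,k+1)}$ to reach arbitrary $t$, incurring an extra additive $C\|f\|_{L^{n+1}_{*}(Q)}$, whereas you apply the recursion directly from any $t\in\mathbb{R}$ and so obtain the slightly cleaner constant $C=\tfrac{1-\delta}{\delta\varepsilon_{0}}$.
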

\begin{proof}
Without loss of generality, we assume that $\sup\limits_{x\in\partial Q}u^{+}(x)=0$. If not, we can consider the function $w(x)=u(x)-\sup\limits_{x\in\partial Q}u^{+}(x)$. Therefore we only need to prove
$$u^{+}(x)\leq C\|f\|_{L^{n+1}_{*}(Q)},\quad x\in Q.
$$

We assume there exists $M$ large enough such that $u(x)\leq M$ for any $x\in Q$, and denote $\|f\|_{L^{n+1}_{*}(Q)}$ by $F$. 
By Corollary $\ref{mpgf}$, there exists $\delta\in(0,1)$ and $0<\varepsilon_{0}<1$ such that for any $k\in\mathbb{Z}$,
\begin{eqnarray*}
\hat{u}(k)&\leq&(1-\delta)\hat{u}(k-1)+
\frac{(1-\delta)}{\varepsilon_{0}}\|f\|_{L^{n+1}(Q_{(k-1,k+1)})}\\ &\leq&(1-\delta)M+
\frac{(1-\delta)}{\varepsilon_{0}}\|f\|_{L^{n+1}(Q_{(k-1,k+1)})}\\ &=&(1-\delta)M+
\frac{(1-\delta)}{\varepsilon_{0}}F.
\end{eqnarray*}
Moreover, for any $k\in\mathbb{Z}$, we have
\begin{eqnarray*}
\hat{u}(k) &\leq&(1-\delta)\hat{u}(k-1)+
\frac{(1-\delta)}{\varepsilon_{0}}F\\
&\leq&(1-\delta)\left((1-\delta)\hat{u}(k-2)+
\frac{(1-\delta)}{\varepsilon_{0}}F\right)+\frac{(1-\delta)}{\varepsilon_{0}}F\\
&=&(1-\delta)^{2}\hat{u}(k-2)+
 \frac{(1-\delta)^{2}}{\varepsilon_{0}}F+\frac{(1-\delta)}{\varepsilon_{0}}F\\
 &\vdots&\\
 &\leq&
(1-\delta)^{m}M+\frac{F}{\varepsilon_{0}}\sum_{i=1}^{m}(1-\delta)^{i}.
\end{eqnarray*}
Then by maximum principle, for any $x=(y,t)\in Q_{[k,k+1)}$ with $k\in\mathbb{Z}$,
\begin{eqnarray*}
\hat{u}(t)&\leq&(1-\delta)^{m}M+\frac{F}{\varepsilon_{0}}\sum_{i=1}^{m}(1-\delta)^{i}+C
\|f\|_{L^{n+1}(Q_{(k-1,k+1)})}\\ &\leq&(1-\delta)^{m}M+\frac{F}{\varepsilon_{0}}\cdot\frac{1-\delta}{\delta}+CF,
\end{eqnarray*}
where $C$ only depends on $n,\lambda,\Lambda,\Omega$. Let $m\rightarrow+\infty$, then for any $t\in\mathbb{R}$, we have
$$\hat{u}(t)\leq \left(\frac {1-\delta}{\varepsilon_{0}\delta}+C\right)F,\quad t\in\mathbb{R}.
$$
Hence for any $x\in Q$,
$$u(x)\leq \left(\frac {1-\delta}{\varepsilon_{0}\delta}+C\right)F,
$$
where $\left(\frac {1-\delta}{\varepsilon_{0}\delta}+C\right)$ depends only on $n,\lambda,\Lambda,\Omega$.
\end{proof}

\begin{lm}[Maximum principle in $Q^{+}$]\label{mpq+f}
Let $u$ satisfy $Lu(x)\leq f(x)$ for $x\in Q^{+}$. If $u(x)$ is bounded from above, then we have
$$\sup_{x\in Q^{+}}u^{+}(x)\leq\sup_{x\in\partial_{p} Q^{+}}u^{+}(x)+C\|f\|_{L^{n+1}_{*}(Q^{+})},
$$
where $C$ depends only on $n,\lambda,\Lambda,\Omega$.
\end{lm}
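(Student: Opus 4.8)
The plan is to mimic the proof of Lemma~\ref{mpqf} but adapted to the one-sided cylinder $Q^+ = Q_{(0,+\infty)}$, where now the parabolic boundary $\partial_p Q^+$ includes both the lateral part $\partial_l Q^+$ and the bottom $\partial_b Q^+ = \Omega\times\{t=0\}$. As in Lemma~\ref{mpqf}, I would first reduce to the case $\sup_{x\in\partial_p Q^+}u^+(x)=0$ by replacing $u$ with $u(x)-\sup_{x\in\partial_p Q^+}u^+(x)$, so that it suffices to show $u^+(x)\le C\|f\|_{L^{n+1}_*(Q^+)}=:CF$ on $Q^+$. In particular after this reduction $\hat u(0)=\sup_{y\in\Omega}u^+(y,0)=0$ since the bottom slice lies in $\partial_p Q^+$.

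Next I would iterate Corollary~\ref{mpgf}, but now marching \emph{forward} in $t$ starting from the initial slice $t=0$ rather than back from $t=+\infty$. Applying Corollary~\ref{mpgf} with $t_0=k-1$ for each integer $k\ge 1$ gives
\[
\hat u(k)\le(1-\delta)\hat u(k-1)+\frac{1-\delta}{\varepsilon_0}F,
\]
and since $\hat u(0)=0$, unwinding the recursion yields
\[
\hat u(k)\le\frac{F}{\varepsilon_0}\sum_{i=1}^{k}(1-\delta)^{i}\le\frac{1-\delta}{\varepsilon_0\delta}\,F
\]
for every integer $k\ge 0$. Then, exactly as in Lemma~\ref{mpqf}, for $x=(y,t)\in Q_{[k,k+1)}$ I would use the local maximum principle (\cite[Theorem 5.1]{Li}) on $Q_{(k-1,k+1)}$ — or on $Q_{(0,k+1)}$ when $k=0$, using $\hat u(0)=0$ on the bottom — together with $u\le 0$ on $\partial_l Q^+$ to bound $\hat u(t)\le(1-\delta)^? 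M$-type term plus $\frac{F}{\varepsilon_0}\cdot\frac{1-\delta}{\delta}+CF$; but here, because the recursion already starts from $0$, no boundedness-from-above constant $M$ survives, and we directly obtain $\hat u(t)\le\bigl(\frac{1-\delta}{\varepsilon_0\delta}+C\bigr)F$ for all $t\ge 0$, hence $u^+(x)\le\bigl(\frac{1-\delta}{\varepsilon_0\delta}+C\bigr)F$ on $Q^+$.

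The one genuine subtlety — and the step I would watch most carefully — is the base interval near $t=0$: Corollary~\ref{mpgf} only controls $\hat u(t_0+1)$ from $\hat u(t_0)$ on slices, so I must make sure the estimate for $t\in[0,1)$ is handled correctly using that $u\le0$ on $\partial_b Q^+$ and $\partial_l Q^+$, i.e. $u\le0$ on all of $\partial_p Q_{(0,1)}$ except that on $\partial_p Q_{(0,2)}$ we only know $u\le 0$ on the bottom. Concretely, one applies the local maximum principle on $Q_{(0,2)}$ with parabolic-boundary data bounded by $\hat u(0)=0$ on the bottom plus $\hat u(1)\le\frac{1-\delta}{\varepsilon_0\delta}F$ controlling nothing extra — in fact the cleanest route is to note that on $Q_{(0,2)}$ the data on $\partial_p Q_{(0,2)}\setminus\partial_l Q_{(0,2)}$ is $u(\cdot,0)\le 0$, so the local estimate gives $\hat u(t)\le CF$ directly for $t\in[0,2)$, and then the forward iteration takes over for $t\ge 1$. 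Everything else (the geometric series, the dependence of constants only on $n,\lambda,\Lambda,\Omega$ via translation-invariance of $\mathrm{diam}(Q_{(t_0,t_0+2)})$) is identical to the proof of Lemma~\ref{mpqf}.
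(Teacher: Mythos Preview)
Your proposal is correct and follows essentially the same route as the paper's own proof: reduce to $\sup_{\partial_p Q^+}u^+=0$ so that $\hat u(0)=0$, iterate Corollary~\ref{mpgf} forward in $t$ to get $\hat u(k)\le\frac{F}{\varepsilon_0}\sum_{i=1}^k(1-\delta)^i$, handle intermediate $t\in[k,k+1)$ for $k\ge1$ by the local maximum principle, and treat $t\in(0,1)$ separately via the local maximum principle on $Q_{(0,2)}$ with zero parabolic-boundary data. The paper presents exactly this argument, so there is nothing to add.
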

\begin{proof}
Without loss of generality, we assume that $\sup\limits_{x\in\partial_{p} Q^{+}}u(x)=0$. This implies that $\hat{u}(0)=0$. we denote $\|f\|_{L^{n+1}_{*}(Q^{+})}$ by $F_{1}$. By Lemma $\ref{mpgf}$, there exist $\delta\in(0,1)$ and $\varepsilon_{0}\in(0,1)$ such that for any $k\in\mathbb{N}_{+}$,
\begin{eqnarray*}
\hat{u}(k)
 \leq
(1-\delta)^{k}\hat{u}(0)+\frac{F}{\varepsilon_{0}}\sum_{i=1}^{k}(1-\delta)^{i}.
\end{eqnarray*}
Then by maximum principle, for any $x=(y,t)\in Q_{[k,k+1)}$, $k\in\mathbb{N}_{+}$,
\begin{eqnarray*}
\hat{u}(t)&\leq&(1-\delta)^{k}\hat{u}(0)+\frac{F_{1}}{\varepsilon_{0}}\sum_{i=1}^{k}(1-\delta)^{i}+C
\|f\|_{L^{n+1}(Q_{(k-1,k+1)})}\\
&\leq&(1-\delta)^{k}\hat{u}(0)+\frac{F_{1}}{\varepsilon_{0}}\cdot\frac{1-\delta}{\delta}+CF_{1}\\
&=&\left(\frac{1-\delta}{\varepsilon_{0}\delta}+C\right)F_{1}.
\end{eqnarray*}

For any $x=(y,t)\in Q_{(0,1)}$, by maximum principle, we have for any $t\in(0,1)$,
$$\hat{u}(t)\leq\hat{u}(0)+CF_{1}=CF_{1}.
$$

The above two inequalities imply the result, hence we complete the proof.
\end{proof}
\begin{lm}[Decay of $u$ in $Q^{+}$ with $f$]\label{decayf}
If $u$ satisfies
\begin{eqnarray*}
\left\{
\begin{array}{rcll}
Lu(x) &\leq& f(x), \qquad &x\in Q^{+},\\
u(x)&=& 0,  \qquad &x\in\partial_{l}Q^{+},
\end{array}
\right.
\end{eqnarray*}
then there exist constants $\alpha,~C_{0},~C_{1}>0$ depending only on $n,\lambda,\Lambda,\Omega$ such that
$$u(x)\leq C_{0}\hat{u}(0)e^{-\alpha t}+C_{1}\|f\|_{L^{n+1}_{*}(Q^{+})},\quad x\in Q^{+}.
$$
\end{lm}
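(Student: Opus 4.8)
The plan is to iterate Corollary~\ref{mpgf} starting from $t_0 = 0$, exactly as in the proof of Lemma~\ref{mpq+f}, but now keeping track of the geometric decay factor rather than summing it away. Write $F := \|f\|_{L^{n+1}_*(Q^+)}$. Applying Corollary~\ref{mpgf} on each slab $Q_{(k-1,k+1)}$ for $k \in \mathbb{N}_+$ gives
$$
\hat{u}(k) \leq (1-\delta)\hat{u}(k-1) + \frac{1-\delta}{\varepsilon_0}\|f\|_{L^{n+1}(Q_{(k-1,k+1)})} \leq (1-\delta)\hat{u}(k-1) + \frac{1-\delta}{\varepsilon_0}F,
$$
and iterating down to $k=0$ yields
$$
\hat{u}(k) \leq (1-\delta)^k \hat{u}(0) + \frac{F}{\varepsilon_0}\sum_{i=1}^{k}(1-\delta)^i \leq (1-\delta)^k \hat{u}(0) + \frac{1-\delta}{\varepsilon_0 \delta}F.
$$
This is the same bound as in Lemma~\ref{mpq+f}, but I will retain the first term $(1-\delta)^k\hat{u}(0)$ in full.

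The next step is to convert the decay in the integer variable $k$ into pointwise exponential decay in $t$. Choose $\alpha > 0$ with $e^{-\alpha} = 1-\delta$, i.e. $\alpha = -\log(1-\delta) > 0$; note $\alpha$ depends only on $n,\lambda,\Lambda,\Omega$ since $\delta$ does. For $x = (y,t) \in Q_{[k,k+1)}$ with $k \in \mathbb{N}_+$, apply the maximum principle (Lemma~\ref{mpq+f}, or directly the local maximum principle of \cite[Theorem 5.1]{Li}) on the slab $Q_{(k-1,k+1)}$ to control $u(x)$ by $\hat{u}$ on the lateral and bottom parts of that slab plus the $f$-term:
$$
u(x) \leq \hat{u}(t) \leq \hat{u}(k-1) + C\|f\|_{L^{n+1}(Q_{(k-1,k+1)})} \leq (1-\delta)^{k-1}\hat{u}(0) + \left(\frac{1-\delta}{\varepsilon_0\delta} + C\right)F.
$$
Since $k-1 > t-2$, we have $(1-\delta)^{k-1} = e^{-\alpha(k-1)} \leq e^{2\alpha}e^{-\alpha t}$, so with $C_0 := e^{2\alpha}$ and $C_1 := \frac{1-\delta}{\varepsilon_0\delta} + C$ this gives $u(x) \leq C_0 \hat{u}(0)e^{-\alpha t} + C_1 F$ on $Q_{[1,\infty)}$. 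For the remaining strip $Q_{(0,1)}$, the maximum principle on $Q_{(0,2)}$ gives $u(x) \leq \hat{u}(0) + CF \leq C_0\hat{u}(0)e^{-\alpha t} + C_1 F$ after adjusting the constants (using $e^{-\alpha t} \geq e^{-\alpha}$ there). Combining the two ranges and enlarging $C_0, C_1$ to absorb the bounded factors finishes the proof.

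I do not anticipate a serious obstacle: this is essentially a bookkeeping refinement of Lemma~\ref{mpq+f}, where the only new idea is not discarding the geometric factor. The one point requiring a little care is the passage from the integer-indexed estimate to the continuous estimate on each unit strip — one must invoke the maximum principle on a slab of fixed width (here $Q_{(k-1,k+1)}$, whose parabolic diameter is independent of $k$ by the translation-invariance remark preceding Corollary~\ref{mpgf}) so that the constant $C$ from \cite{Li} is uniform in $k$. A second minor subtlety is the behavior near $t=0$, handled separately above, and the fact that we need $u(x) \leq \hat{u}(t)$, which holds because $u \leq u^+ \leq \hat u$ by definition of $\hat u$.
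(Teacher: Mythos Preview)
Your proposal is correct and follows essentially the same route as the paper: iterate Corollary~\ref{mpgf} to get $\hat u(k)\le(1-\delta)^k\hat u(0)+\frac{1-\delta}{\varepsilon_0\delta}F$, pass to continuous $t$ via the standard maximum principle on a unit-width slab, set $\alpha=-\ln(1-\delta)$, and treat $Q_{(0,1)}$ separately. The only cosmetic difference is that the paper bounds $\hat u(t)$ for $t\in[k,k+1)$ by $\hat u(k)$ rather than $\hat u(k-1)$, which yields $C_0=\tfrac{1}{1-\delta}$ instead of your $e^{2\alpha}$; also, the reference to Lemma~\ref{mpq+f} for the slab estimate should simply be the classical ABP maximum principle \cite[Theorem~5.1]{Li}, as you note parenthetically.
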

\begin{proof}
We denote $\|f\|_{L^{n+1}_{*}(Q^{+})}$ by $F_{1}$. In fact, in the proof of Lemma $\ref{mpq+f}$, we have already proved that for any $k\in\mathbb{N}_{+}$, $x=(y,t)\in Q_{[k,k+1)}$,
\begin{eqnarray*}
\hat{u}(t)
&\leq&(1-\delta)^{k}\hat{u}(0)+\frac{F_{1}}{\varepsilon_{0}}\cdot\frac{1-\delta}{\delta}+CF_{1}\\
&=&(1-\delta)^{[t]}\hat{u}(0)+\frac{F_{1}}{\varepsilon_{0}}\cdot\frac{1-\delta}{\delta}+CF_{1}.
\end{eqnarray*}
Hence, for any $x=(y,t)\in Q_{[1,+\infty)}$,
\begin{eqnarray*}
\hat{u}(t)&\leq&(1-\delta)^{[t]}\hat{u}(0)+\frac{F_{1}}{\varepsilon_{0}}\cdot\frac{1-\delta}{\delta}+CF_{1}\\
&\leq&(1-\delta)^{t-1}\hat{u}(0)+\frac{F_{1}}{\varepsilon_{0}}\cdot\frac{1-\delta}{\delta}+CF_{1}\\
&=&\frac{\hat{u}(0)}{(1-\delta)}e^{-\alpha t}+\left(\frac{1-\delta}{\varepsilon_{0}\delta}+C\right)F_{1},
\end{eqnarray*}
where $\alpha=-\ln(1-\delta)>0$. For any $x=(y,t)\in Q_{(0,1)}$, by maximum principle, we have for any $t\in(0,1)$,
$$\hat{u}(t)\leq\hat{u}(0)+CF_{1}.
$$
Combining the above two inequalities, we finish the proof by taking $C_{0}=\frac{1}{1-\delta}$ and $C_{1}=\frac{1-\delta}{\varepsilon_{0}\delta}+C$.
\end{proof}
\begin{rem}\label{decayt0f}
For $t_{0}\in\mathbb{R}$, if $u$ satisfies
\begin{eqnarray*}
\left\{
\begin{array}{rcll}
Lu(x) &\leq& f(x), \qquad &x\in Q_{(t_{0},+\infty)},\\
u(x)&=& 0,  \qquad &x\in\partial_{l}Q_{(t_{0},+\infty)},
\end{array}
\right.
\end{eqnarray*}
then there exist constants $\alpha,~C_{0},~C_{1}>0$ depending only on $n,\lambda,\Lambda,\Omega$ such that
$$u(x)\leq C_{0}\hat{u}(t_{0})e^{-\alpha (t-t_{0})}+C_{1}\|f\|_{L^{n+1}_{*}(Q_{(t_{0},+\infty)})},\quad x\in Q_{(t_{0},+\infty)}.
$$
\end{rem}
\section{Harnack inequality}
In this section, in order to study the structure and asymptotic behavior of solutions, we establish a Harnack inequality and a comparison theorem in our form by the boundary Harnack inequality \cite[Theorem 3.5]{Juraj} and the elliptic-type Harnack inequality \cite[Theorem 3.7]{Juraj}.
\begin{lm}\label{bhc}
Let $u\in U$. There exists a constant $C>0$ which depending only on $n,\lambda,\Lambda,\Omega$ such that for any $t_{0}\in\mathbb{R}$,
$$u(x)\leq Cu(0,t_{0}),\quad x\in Q_{(t_{0}-2,t_{0}+2)}.
$$
\end{lm}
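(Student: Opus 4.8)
The plan is to bootstrap the interior (elliptic-type) Harnack inequality \cite[Theorem 3.7]{Juraj} together with the boundary Harnack inequality \cite[Theorem 3.5]{Juraj}, exactly as in the cited references, but applied on the strip $Q_{(t_0-2,t_0+2)}$ whose geometry is independent of $t_0$. Since $u\in U$ solves $Lu=0$ in $Q$ with $u=0$ on $\partial_l Q$ and $u>0$ inside, both tools apply on any sub-cylinder. The key point that makes the constant depend only on $n,\lambda,\Lambda,\Omega$ (and not on $t_0$) is translation invariance in $t$: after translating $t\mapsto t-t_0$ we may assume $t_0=0$ and work once and for all on $Q_{(-2,2)}$.

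First I would split $\Omega$ into an "interior part'' $\Omega_\sigma=\{y\in\Omega:\operatorname{dist}(y,\partial\Omega)>\sigma\}$ for a fixed small $\sigma=\sigma(\Omega)$, and a boundary collar $\Omega\setminus\Omega_\sigma$. On the interior part, I would use the elliptic-type Harnack inequality of \cite[Theorem 3.7]{Juraj}: covering $\overline{\Omega_\sigma}\times[-2,2]$ by finitely many parabolic balls (the number and radii depending only on $n,\Omega$), a standard Harnack chain argument comparing values at different time levels gives $u(y,t)\le C\,u(0,0)$ for all $(y,t)\in \Omega_\sigma\times[-2,2]$, with $C=C(n,\lambda,\Lambda,\Omega)$; here one uses that $0\in\Omega$ is an interior point, so $(0,0)$ can be joined to every point of $\overline{\Omega_\sigma}\times[-2,2]$ by a chain of controlled length. (A minor technical point: to reach times $t$ close to the top $t=2$ one enlarges the strip slightly, applying the chain on $Q_{(-3,3)}$, say, and then uses interior estimates — this is harmless since $u$ solves the equation on all of $Q$.) Second, on the boundary collar I would invoke the boundary Harnack inequality \cite[Theorem 3.5]{Juraj} for nonnegative solutions vanishing on $\partial_l Q$: it controls $u$ near $\partial_l Q_{(-2,2)}$ by its value at an interior reference point at comparable parabolic scale, which in turn is $\le C u(0,0)$ by the first step. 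Combining the two estimates over all of $\Omega\times[-2,2]$ yields $u(x)\le C u(0,0)$ on $Q_{(-2,2)}$, and translating back gives the claim on $Q_{(t_0-2,t_0+2)}$.

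The main obstacle is organizing the Harnack chain so that its length (hence the final constant) is genuinely independent of $t_0$ and of the point $x$; this is where the $t$-translation invariance and the fixed geometry of $\Omega$ are essential, and where one must be slightly careful about reaching time levels near the endpoints of the interval $(t_0-2,t_0+2)$ — resolved by running the argument on a marginally larger interval where $u$ is still a solution. Everything else (the covering of $\Omega_\sigma$, the collar estimate via boundary Harnack) is a routine packaging of the two cited theorems, and the fact that $c\ge 0$ and $b_i\in L^\infty$ with the stated bounds ensures the constants in \cite{Juraj} depend only on $n,\lambda,\Lambda,\Omega$.
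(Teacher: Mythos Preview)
Your proposal is correct and matches the paper's approach: the paper does not give a proof of this lemma at all, but simply states it as a direct consequence of the boundary Harnack inequality \cite[Theorem 3.5]{Juraj} and the elliptic-type Harnack inequality \cite[Theorem 3.7]{Juraj}, which are precisely the two tools you invoke. Your plan fills in the details the paper omits (translation to $t_0=0$, interior Harnack chain on $\Omega_\sigma$, boundary collar via the boundary Harnack principle), and your observation about enlarging the time interval slightly is exactly the right way to handle endpoints since $u$ solves the equation on all of $Q$.
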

\begin{lm}[Comparison Theorem]\label{ct}
Let $u,v\in U$. If $u,v$ satisfy $u(0,1)=v(0,1)$, then there exists constant $C_{*}\geq1$ depending only on $n,\lambda,\Lambda,\Omega$ such that
$$\frac{1}{C_{*}}v(y,t)\leq u(y,t)\leq C_{*}v(y,t),\quad y\in\Omega,~t\geq0.
$$
\end{lm}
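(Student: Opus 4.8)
The plan is to reduce, by the linearity of $L$ and the symmetry of the hypothesis, to producing a single constant $C=C(n,\lambda,\Lambda,\Omega)$ with $u(y,t)\le C\,v(y,t)$ for $y\in\Omega$, $t\ge0$; applying this with the roles of $u,v$ reversed and setting $C_{*}=C$ then gives the lemma. I would first compare $u$ and $v$ on the slice $\{t=0\}$ using the boundary Harnack inequality, and then propagate the comparison to all $t\ge0$ by the maximum principle in $Q^{+}$.

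For the first step I would argue as follows. Since $u,v\in U$, they are positive solutions of $Lu=Lv=0$ on the whole cylinder $Q$ and vanish on $\partial Q$; in particular they are positive solutions on, say, $Q_{(-2,3)}$ vanishing on its lateral boundary. Applying the boundary Harnack inequality \cite[Theorem 3.5]{Juraj} on this cylinder — and, if its statement is phrased with a fixed reference point, the elliptic-type Harnack inequality \cite[Theorem 3.7]{Juraj} to relate that point to $(0,1)$ — gives a constant $C=C(n,\lambda,\Lambda,\Omega)$ with
$$\sup_{Q_{(-1,2)}}\frac{u}{v}\ \le\ C\inf_{Q_{(-1,2)}}\frac{u}{v}\ \le\ C\,\frac{u(0,1)}{v(0,1)}\ =\ C,$$
the last equality by hypothesis and the fact that $(0,1)\in Q_{(-1,2)}$. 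The quotient $u/v$ extends continuously up to the lateral boundary (that is the content of the inequality), so in particular $u(y,0)\le C\,v(y,0)$ for all $y\in\Omega$.

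For the second step, I would put $w:=u-C\,v$, so that $Lw=0$ in $Q^{+}$ and $w=0$ on $\partial_{l}Q^{+}$; by the first step $w\le0$ on $\Omega\times\{0\}$, hence $w\le0$ on all of $\partial_{p}Q^{+}$. Lemma \ref{decayf} applied to $u$ on $Q^{+}$ (with $f\equiv0$) gives $u(x)\le C_{0}\hat{u}(0)e^{-\alpha t}\le C_{0}\hat{u}(0)$ on $Q^{+}$, so $w\le u$ is bounded from above there. Then Lemma \ref{mpq+f} applied to $w$ (again with $f\equiv0$) gives $\sup_{Q^{+}}w^{+}\le\sup_{\partial_{p}Q^{+}}w^{+}=0$, i.e.\ $u\le C\,v$ on $Q^{+}$; combined with the slice estimate this yields $u\le C\,v$ on $\Omega\times[0,+\infty)$, and the symmetric statement finishes the proof with $C_{*}=C$.

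I expect the first step to be the main obstacle: it needs the parabolic boundary Harnack inequality in its genuinely two-sided form, up to the lateral boundary, and on a time interval that dips below $t=0$ so that the initial slice is covered — this is exactly where the backward-in-time comparison (the delicate ingredient of the parabolic boundary Harnack principle, as opposed to its forward-only counterpart) enters, and it is also why one needs $u,v$ to be defined on all of $Q$ rather than only on $Q^{+}$. Once the comparison on $\{t=0\}$ is secured, the rest is routine: the maximum principle in $Q^{+}$ together with the forward exponential decay of $u$ from Lemma \ref{decayf}.
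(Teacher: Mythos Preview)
Your proposal is correct and follows essentially the same two-step strategy as the paper: obtain $u(y,0)\le C\,v(y,0)$ on the initial slice via the Harnack-type inequalities of \cite{Juraj}, then propagate forward by the maximum principle in $Q^{+}$ (Lemma~\ref{mpq+f}). The only cosmetic difference is that the paper splits the slice estimate into an interior piece (applying the elliptic-type Harnack \cite[Theorem~3.7]{Juraj} to $u$ and $v$ separately on $Q^{*}=\{(y,t):\operatorname{dist}(y,\partial\Omega)>r_0/2,\ t\in(-2,2)\}$) and a near-boundary piece (via \cite[Theorem~3.6]{Juraj} for the ratio), whereas you package both into a single appeal to the boundary Harnack principle \cite[Theorem~3.5]{Juraj}; your extra care in checking that $w=u-Cv$ is bounded above before invoking Lemma~\ref{mpq+f} is a detail the paper leaves implicit.
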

\begin{proof}
We denote $A=u(0,1)=v(0,1)$. By elliptic-type Harnack inequality \cite[Theorem 3.7]{Juraj}, there exists $C_{1}>1$ and $r_{0}>0$ small enough depending only on $n,\lambda,\Lambda,\Omega$ such that $(0,1)\in Q^{*}=\{(y,t)|\text{dist}(y,\partial\Omega)> \frac{r_{0}}{2},~t\in(-2,2)\}$ and
$$\frac{A}{C_{1}}\leq u(y,t)\leq C_{1}A,\quad (y,t)\in Q^{*},
$$
$$\frac{A}{C_{1}}\leq v(y,t)\leq C_{1}A,\quad (y,t)\in Q^{*}.
$$
For any $(y,t)$ with $\text{dist}(y,\partial\Omega)\leq r_{0},~t=0$, by \cite[Theorem 3.6]{Juraj}, there exists a constant $C_{2}>1$ depending only on $n,\lambda,\Lambda,\Omega$ such that
$$\frac{u(y,0)}{v(y,0)}\leq C_{2}\frac{\sup\limits_{Q^{*}}u}{\inf\limits_{Q^{*}}v}\leq C_{2}\frac{C_{1}A}{\frac{A}{C_{1}}}=C_{2}C_{1}^{2}.
$$
Hence
$$u(y,0)\leq C_{2}C_{1}^{2}v(y,0),\quad y\in\Omega.
$$
Let $C_{*}=C_{2}C_{1}^{2}$ and by maximum principle in $Q^{+}$(Lemma $\ref{mpq+f}$), we obtain that
$$u(y,t)\leq C_{*}v(y,t),\quad (y,t)\in Q^{+}.
$$
By symmetric property, we can also get
$$v(y,t)\leq C_{*}u(y,t),\quad (y,t)\in Q^{+}.
$$
The proof is finished.
\end{proof}
\begin{rem}\label{ctc}
If the condition $u(0,1)=v(0,1)$ is replaced by $u(0,1)\leq v(0,1)$, then we have the following result
$$u(y,t)\leq C_{*}v(y,t),\quad (y,t)\in Q^{+}.
$$
\end{rem}
\begin{rem}\label{kk}
Let $u,v\in U$ with $u(0,t_{0})=v(0,t_{0})$ for some $t_{0}\in\mathbb{R}$. Then there exists constant $C_{*}\geq1$ depending only on $n,\lambda,\Lambda,\Omega$ such that
$$\frac{1}{C_{*}}v(y,t)\leq u(y,t)\leq C_{*}v(y,t),\quad y\in\Omega,~t\geq t_{0}-1.
$$
\end{rem}
The following lemma is an iteration result.
\begin{lm}\label{usc}
Let $u,v\in U$. If $u(0,1)\leq v(0,1)$, then there exists constant $C_{*}\geq1$ depending only on $n,\lambda,\Lambda,\Omega$ such that for any $k\in\mathbb{N}^{+}$,
$$\frac{u(y,t)}{v(y,t)}\leq C_{*}^{k},\quad y\in\Omega,~t\geq 1-k.
$$
\end{lm}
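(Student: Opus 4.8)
The plan is to argue by induction on $k\in\mathbb{N}^{+}$, propagating a comparison estimate one unit of time downward at each step, and using that $U$ is a positive cone — closed under multiplication by positive constants, which is immediate from the linearity of $L$ together with the zero boundary condition and positivity.

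For $k=1$ the assertion is $u(y,t)\le C_* v(y,t)$ for $t\ge 0$, which is exactly Remark~\ref{ctc} applied to the hypothesis $u(0,1)\le v(0,1)$ (the estimate there is stated on $Q^{+}$ and extends up to $t=0$ by continuity of $u,v$ on $\bar Q$; alternatively, one invokes the one-sided form of Remark~\ref{kk} with $t_{0}=1$). Now assume the estimate $u(y,t)\le C_*^{k}v(y,t)$ holds for all $y\in\Omega$ and all $t\ge 1-k$. Set $t_{0}:=1-k$ and evaluate at $y=0$ to get $u(0,t_{0})\le C_*^{k}v(0,t_{0})$. Put $v_{k}:=C_*^{k}v$, so that $v_{k}\in U$ and $u(0,t_{0})\le v_{k}(0,t_{0})$; let $\rho:=v_{k}(0,t_{0})/u(0,t_{0})\ge 1$, so that $\rho u\in U$ and $(\rho u)(0,t_{0})=v_{k}(0,t_{0})$. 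Applying the time-shifted comparison theorem (Remark~\ref{kk}) to the pair $\rho u,\,v_{k}$ at the slice $t_{0}$ yields
$$u(y,t)\le(\rho u)(y,t)\le C_* v_{k}(y,t)=C_*^{k+1}v(y,t),\quad y\in\Omega,\ t\ge t_{0}-1=-k=1-(k+1),$$
where the first inequality uses $\rho\ge 1$. This is precisely the estimate at level $k+1$, so the induction closes.

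There is no serious analytic obstacle beyond the comparison machinery already established; the single point that must be tracked is that the constant $C_*$ of Lemma~\ref{ct} and Remark~\ref{kk} depends only on $n,\lambda,\Lambda,\Omega$ and is independent of the time level $t_{0}$. It is exactly this uniformity in $t_{0}$ that permits the same $C_*$ to be used at every stage, so that precisely one extra factor of $C_*$ is accumulated per unit decrease of $t$, giving the bound $C_*^{k}$ on $\{t\ge 1-k\}$.
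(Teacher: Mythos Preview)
Your proof is correct and matches the paper's intent: the paper states this lemma without proof, describing it only as ``an iteration result,'' and your induction on $k$---propagating the comparison one unit downward via Remark~\ref{kk} at each step---is exactly that iteration made explicit. The normalization via $\rho\ge 1$ to turn the inequality at the anchor point into an equality (so that Remark~\ref{kk} applies) is the right device, and your observation that the uniformity of $C_*$ in $t_0$ is what makes the induction close is precisely the point.
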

\begin{lm}
Let $u,v\in U$. If $u(0,1)\leq v(0,1)$, then there exists constant $C_{*}\geq1$ depending only on $n,\lambda,\Lambda,\Omega$ such that,
$$\frac{u(y,t)}{v(y,t)}\leq C_{*}^{2},\quad y\in\Omega,~t\leq -1.
$$
\end{lm}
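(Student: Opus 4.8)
The plan is to propagate the hypothesis, which is imposed only at the single reference time $t=1$, down to an arbitrary slice $\{t=s\}$ with $s\le-1$, by comparing $u$ with a rescaled copy of $v$ pinned to $u$ at the backward base point $(0,s)$ and then exploiting the overlap of the regions on which the comparison is valid.

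First, fix $s\le-1$ and set $\mu:=u(0,s)/v(0,s)$, which is a well-defined positive number since $u,v>0$ on $Q$. By Theorem~\ref{se0} the function $v_s:=\mu v$ again lies in $U$, and by construction $u(0,s)=v_s(0,s)$. Applying Remark~\ref{kk} to the pair $u,\,v_s$ at the point $(0,s)$ produces a constant $C_*\ge1$, depending only on $n,\lambda,\Lambda,\Omega$ (indeed the constant of Lemma~\ref{ct}), such that
\[
\frac{1}{C_*}\,v_s(y,t)\ \le\ u(y,t)\ \le\ C_*\,v_s(y,t),\qquad y\in\Omega,\ \ t\ge s-1 .
\]
Since $s\le-1$, the half-line $[s-1,+\infty)$ contains both the reference time $t=1$ and the target time $t=s$, which is the crux of the argument.

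Next, evaluating the left inequality at $(0,1)$ gives $u(0,1)\ge C_*^{-1}v_s(0,1)=C_*^{-1}\mu\,v(0,1)$; together with the hypothesis $u(0,1)\le v(0,1)$ and $v(0,1)>0$ this forces $\mu\le C_*$. Then, evaluating the right inequality on the slice $t=s$ and using $\mu\le C_*$ yields
\[
u(y,s)\ \le\ C_*\,v_s(y,s)\ =\ C_*\mu\,v(y,s)\ \le\ C_*^{2}\,v(y,s),\qquad y\in\Omega .
\]
Letting $s$ range over all of $(-\infty,-1]$ gives $u(y,t)\le C_*^{2}v(y,t)$ for every $y\in\Omega$ and $t\le-1$, which is the claim, with the same constant $C_*$ as in Remark~\ref{kk}.

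The only genuinely delicate point is the bookkeeping of the domains of validity: Remark~\ref{kk} controls the ratio $u/v_s$ only for $t\ge s-1$, so everything depends on the elementary fact that $1\in[s-1,+\infty)$ and $s\in[s-1,+\infty)$ whenever $s\le-1$. This is exactly what makes the resulting constant independent of $s$ — hence of $t$ — in contrast with the $k$-dependent bound $C_*^{k}$ of Lemma~\ref{usc}, whose proof instead pushes the hypothesis backward one unit of time at a step. Beyond this observation there is essentially no analytic work left: the argument only invokes Theorem~\ref{se0} and the comparison statement of Remark~\ref{kk}.
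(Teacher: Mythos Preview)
Your proof is correct and follows essentially the same idea as the paper's: both pin a rescaled copy of $v$ to $u$ at a backward base time and invoke Remark~\ref{kk} to carry the comparison forward to the reference time $t=1$. The only difference is packaging --- the paper wraps this in a contradiction argument (and, as written, only records the bound at the center $y=0$), whereas your direct version delivers the inequality on the full slice $\{t=s\}$ in one stroke.
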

\begin{proof}
We claim that for any $\varepsilon>0$,
$$\frac{u(0,t)}{v(0,t)}\leq C_{*}^{2}+\varepsilon,\quad t\leq0.
$$
In fact, if not, there exist $\varepsilon_{0}>0$ and $t_{0}\in(-\infty,0]$ such that
$$\frac{u(0,t_{0})}{v(0,t_{0})}>C_{*}^{2}+\varepsilon_{0}.
$$
By Remark $\ref{kk}$, it follows that
$$\frac{u(y,t)}{v(y,t)}\geq\frac{C_{*}^{2}+\varepsilon_{0}}{C_{*}}>1,\quad y\in\Omega,~t\geq t_{0}-1,
$$
which contradicts $u(0,1)\leq v(0,1)$.
\end{proof}
The above two lemmas imply the following comparison lemma.
\begin{lm}\label{us}
Let $u,v\in U$. Then there exists constant $C_{*}\geq1$ depending only on $n,\lambda,\Lambda,\Omega$ such that,
$$\frac{1}{C_{*}^{2}}\leq\frac{u(y,t)}{v(y,t)}\frac{v(0,1)}{u(0,1)}\leq C_{*}^{2},\quad (y,t)\in Q.
$$
\end{lm}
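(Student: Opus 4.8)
The plan is to normalize the two solutions so that they take the same value at $(0,1)$ and then read the two‑sided estimate directly off the two lemmas just established. First I would observe that, by Theorem \ref{se0}, the solution set $U$ is stable under multiplication by positive constants (alternatively, one checks at once that any positive multiple of a solution of the homogeneous problem (\ref{pe1}) is again such a solution); hence $\tilde u:=u/u(0,1)$ and $\tilde v:=v/v(0,1)$ both lie in $U$ and satisfy $\tilde u(0,1)=\tilde v(0,1)=1$. Since $\tilde u/\tilde v=\frac{u}{v}\cdot\frac{v(0,1)}{u(0,1)}$, it is enough to prove the bound $C_*^{-2}\le \tilde u(y,t)/\tilde v(y,t)\le C_*^2$ for all $(y,t)\in Q$.

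For the upper bound, since $\tilde u(0,1)\le\tilde v(0,1)$, Lemma \ref{usc} with $k=2$ yields $\tilde u(y,t)/\tilde v(y,t)\le C_*^2$ for all $y\in\Omega$ and $t\ge -1$, while the (unlabelled) lemma immediately preceding the present one yields $\tilde u(y,t)/\tilde v(y,t)\le C_*^2$ for all $y\in\Omega$ and $t\le -1$; as the two half‑lines $\{t\ge-1\}$ and $\{t\le-1\}$ cover $\mathbb R$, we obtain $\tilde u/\tilde v\le C_*^2$ on all of $Q$. Interchanging the roles of $u$ and $v$ (legitimate because $\tilde v(0,1)\le\tilde u(0,1)$ holds as well) and repeating the argument gives $\tilde v/\tilde u\le C_*^2$ on $Q$, i.e. $\tilde u/\tilde v\ge C_*^{-2}$ on $Q$. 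Undoing the normalization then gives precisely the claimed inequality, with $C_*$ taken to be the maximum of the constants furnished by Lemma \ref{usc} and by the lemma preceding it, so that it still depends only on $n,\lambda,\Lambda,\Omega$.

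I expect no genuine analytic obstacle at this stage: all the substance is already contained in Lemma \ref{usc} and its companion (and, further upstream, in the comparison theorem Lemma \ref{ct} and the maximum principle in $Q^{+}$, Lemma \ref{mpq+f}). The only points requiring a bit of care are bookkeeping ones: that the exponent $k=2$ in Lemma \ref{usc} is exactly what is needed to reach down to $t=-1$, that the two half‑lines exhaust $\mathbb R$ so the pointwise bound really holds on the whole cylinder $Q$, and that one may legitimately use a single constant $C_*$ — the largest of the finitely many relevant ones — in both directions of the inequality.
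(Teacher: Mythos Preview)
Your proof is correct and follows exactly the route the paper has in mind: the paper offers no explicit argument for this lemma beyond the sentence ``The above two lemmas imply the following comparison lemma,'' and your normalization followed by Lemma~\ref{usc} with $k=2$ (covering $t\ge -1$) plus the unlabelled lemma (covering $t\le -1$), then a swap of roles, is precisely how those two lemmas combine. One small remark: the constants $C_*$ in Lemma~\ref{usc} and in the unlabelled lemma are in fact the same constant, inherited from the comparison theorem (Lemma~\ref{ct} and Remark~\ref{kk}), so there is no need to pass to a maximum.
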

\section{The structure of $U$}
In this section, we show the structure and asymptotic behavior of positive solutions. First, for $u\in U$, we state some properties of $\hat{u}(t)$.
\begin{lm}
For any $u\in U$, $\hat{u}(t)$ is continuous in $(-\infty,+\infty)$.
\end{lm}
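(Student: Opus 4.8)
The plan is to derive the continuity of $\hat u$ directly from the regularity $u\in C(\bar Q)$ that is built into the definition of $U$; no PDE information is really needed. First note that for $u\in U$ we have $u\in C(\bar Q)$ and $u\ge 0$, so for each fixed $t$ the supremum defining $\hat u(t)$ is attained: since $u(\cdot,t)$ is continuous on the compact set $\bar\Omega$ and vanishes on $\partial\Omega$, we have $\hat u(t)=\sup_{y\in\Omega}u(y,t)=\max_{y\in\bar\Omega}u(y,t)$.

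Next, fix $t_0\in\mathbb{R}$ and restrict attention to the compact cylinder $\bar\Omega\times[t_0-1,t_0+1]$. On this set $u$ is uniformly continuous, so given $\varepsilon>0$ there is $\delta\in(0,1)$ such that $|u(y,t)-u(y,s)|<\varepsilon$ for every $y\in\bar\Omega$ whenever $t,s\in[t_0-1,t_0+1]$ with $|t-s|<\delta$. Combining this with the elementary inequality $\bigl|\max_{y\in\bar\Omega}f(y)-\max_{y\in\bar\Omega}g(y)\bigr|\le\max_{y\in\bar\Omega}|f(y)-g(y)|$ applied to $f=u(\cdot,t)$, $g=u(\cdot,s)$, we obtain $|\hat u(t)-\hat u(s)|\le\max_{y\in\bar\Omega}|u(y,t)-u(y,s)|<\varepsilon$ for $|t-s|<\delta$. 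Hence $\hat u$ is continuous on $[t_0-1,t_0+1]$, and since $t_0\in\mathbb{R}$ is arbitrary, $\hat u$ is continuous on $(-\infty,+\infty)$.

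As an alternative (and as a cross-check of the two one-sided estimates), one can argue by semicontinuity: for $t_k\to t_0$ pick maximizers $y_k\in\bar\Omega$ with $u(y_k,t_k)=\hat u(t_k)$; passing to a subsequence $y_k\to y^\ast\in\bar\Omega$ gives $\limsup_k\hat u(t_k)=u(y^\ast,t_0)\le\hat u(t_0)$, which is upper semicontinuity, while $\hat u(t_k)\ge u(y_0,t_k)\to u(y_0,t_0)=\hat u(t_0)$ for a fixed maximizer $y_0$ at $t_0$ gives lower semicontinuity. I do not expect any real obstacle in this proof: the only point that deserves an explicit word is that the supremum over $\Omega$ is actually a maximum over $\bar\Omega$, which is immediate from $u\in C(\bar Q)$; the continuity of $\hat u$ is then a soft consequence of the local uniform continuity of $u$.
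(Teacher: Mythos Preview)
Your proof is correct. Both your argument and the paper's rely only on $u\in C(\bar Q)$ and the compactness of $\bar\Omega$, with no PDE input. Your main route via uniform continuity on $\bar\Omega\times[t_0-1,t_0+1]$ together with the inequality $|\max f-\max g|\le\max|f-g|$ is cleaner than what the paper does: the paper argues the two one-sided bounds separately, getting $\hat u(t)>\hat u(0)-\varepsilon$ from continuity at a single maximizer $(y_0,0)$ and then obtaining $\hat u(t)<\hat u(0)+\varepsilon$ by a finite-cover argument over $\{(y,0):y\in\Omega\}$---which is effectively a hand-rolled proof of uniform continuity. Your ``alternative'' semicontinuity argument is thus essentially the paper's proof; your primary argument packages the same idea more efficiently.
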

\begin{proof}
We only need to prove that $\hat{u}(t)$ is continuous at 0, namely for any $\varepsilon>0$, there exists $\delta>0$, such that for any $0<|t|<\delta$,
$$|\hat{u}(t)-\hat{u}(0)|<\varepsilon.
$$
Since $u\in U$, we have $\hat{u}(0)>0$. Then there exists $y_{0}\in\Omega$ such that $u(y_{0},0)=\sup\limits_{y\in\Omega}u(y,0)=\hat{u}(0)$.

On the one hand, since $u(x)$ is continuous in $\bar{Q}$, for any $\varepsilon>0$, there exists $\delta_{1}>0$, such that for any $x$ satisfying $\text{dist}_{p}(x,(y_{0},0))<\sqrt{\delta_{1}}$,
$$u(x)>u(y_{0},0)-\varepsilon.
$$
Especially, for $x=(y_{0},t)$ with $|t|<\delta_{1}$, we have $u(y_{0},t)>u(y_{0},0)-\varepsilon$. By the definition of $\hat{u}$, we have
\begin{equation}\label{kl}
\hat{u}(t)>\hat{u}(0)-\varepsilon,\quad |t|<\delta_{1}.	
\end{equation}

On the other hand, $u(y_{0},0)=\hat{u}(0)$ implies that for any $y\in\Omega$,
\begin{equation}
\label{yy0}u(y,0)\leq u(y_{0},0).
\end{equation}
Note that for any $y\in\Omega$, $u$ is continuous at $(y,0)$, then for the above $\varepsilon>0$, there exists $\delta_{y}>0$, such that for any $x$ satisfying $\text{dist}_{p}(x,(y,0))<\sqrt{\delta_{y}}$,
\begin{equation}\label{xy0}
u(x)-u(y,0)<\frac{\varepsilon}{2}.
\end{equation}
Since $\{(y,0)|y\in\Omega\}$ is a compact set in $\mathbb{R}^{n+1}$, it can be covered by $\bigcup\limits_{y\in\Omega}O_{p}((y,0),\frac{\sqrt{\delta_{y}}}{2})$. By the finite covering theorem, the set $\{(y,0)|y\in\Omega\}$ is covered by $\bigcup\limits_{i=1}^{m}O_{p}((y_{i},0),\frac{\sqrt{\delta_{y_{i}}}}{2})$. Let $\delta_{2}=\frac{1}{2}\min\limits_{1\leq i\leq m}\sqrt{\delta_{y_{i}}}$, for any $x=(y,t)$ with $|t|<\delta_{2}$, $y\in\Omega$, there exists $1\leq i\leq m$ such that $(y,t)\in O_{p}((y_{i},0),\sqrt{\delta_{y_{i}}})$.
Then it follows from $(\ref{yy0})$ and $(\ref{xy0})$ that
$$u(y,t)<u(y_{i},0)+\frac{\varepsilon}{2}\leq u(y_{0},0)+\frac{\varepsilon}{2}.
$$
This means that for any $|t|<\delta_{2}$,
\begin{equation}\label{kz}
\hat{u}(t)\leq u(y_{0},0)+\frac{\varepsilon}{2}<\hat{u}(0)+\varepsilon.	
\end{equation}

Let $\delta=\min\{\delta_{1},\delta_{2}\}$. By (\ref{kl}) and (\ref{kz}), we get that for any $|t|<\delta$,
$$|\hat{u}(t)-\hat{u}(0)|<\varepsilon.
$$
\end{proof}
\begin{lm}
For any $u\in U$, we have $m(u)=0$, $\hat{u}(t)$ is a strictly decreasing function in $\mathbb{R}$.
\end{lm}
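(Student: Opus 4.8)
The plan is to prove the monotonicity first and then deduce $m(u)=0$ from it together with the two-sided exponential bounds that the maximum-principle machinery of Section 2 provides. For the monotonicity, fix $t_0\in\mathbb{R}$ and $t_1>t_0$; I want to show $\hat u(t_1)<\hat u(t_0)$. Apply Corollary \ref{mpgf} (or rather its proof, via Lemma \ref{mp02f} and Remark \ref{rem3.2f}) on the cylinder $Q_{(t_0,t_0+2)}$ to the solution $u\in U$: since $f=0$ here and $u=0$ on the lateral boundary, we get $\hat u(t_0+1)\le (1-\delta)\hat u(t_0)$ with $\delta\in(0,1)$ depending only on $n,\lambda,\Lambda,\Omega$. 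More generally, for any $s\in\mathbb{R}$ and any $\tau\in(s,s+1]$ one has $\hat u(\tau)\le\hat u(s)$: indeed on $Q_{(s,\tau)}$ the function $u$ is a solution vanishing on the parabolic side boundary, so by the maximum principle in a finite cylinder (\cite[Theorem 5.1]{Li}) $u(y,t)\le\sup_{\partial_b Q_{(s,\tau)}}u = \hat u(s)$ for all $(y,t)\in Q_{(s,\tau)}$, hence $\hat u(\tau)\le\hat u(s)$. Combining this with the strict decay $\hat u(s+1)\le(1-\delta)\hat u(s)$ over unit time steps shows $\hat u$ is nonincreasing and strictly so: given $t_1>t_0$, choose an integer $N$ with $t_0+N\le t_1$; then $\hat u(t_1)\le\hat u(t_0+N)\le(1-\delta)^N\hat u(t_0)<\hat u(t_0)$ (using $\hat u(t_0)>0$ since $u\in U$). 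If $0<t_1-t_0<1$ one still gets strictness by applying the unit-step estimate once on $Q_{(t_0-1+\epsilon,t_0+1+\epsilon)}$ appropriately, or more simply: $\hat u(t_1)\le\hat u(t_0)$ always holds, and if $\hat u(t_1)=\hat u(t_0)$ for some $t_1>t_0$ then $\hat u$ is constant on $[t_0,t_1]$, which together with $\hat u(t_1+1)\le(1-\delta)\hat u(t_1)<\hat u(t_1)$ and monotonicity forces a contradiction once we iterate past $t_1$; so strict decrease holds throughout.

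For $m(u)=0$: by definition $m(u)=\inf_{t\in\mathbb{R}}\hat u(t)$ and $\hat u\ge0$, so $m(u)\ge0$. Since $\hat u$ is strictly decreasing, $m(u)=\lim_{t\to+\infty}\hat u(t)$. Now apply Lemma \ref{decayf} (decay of $u$ in $Q^{+}$ with $f$) with $f=0$: the solution $u\in U$, restricted to $Q^{+}=Q_{(0,+\infty)}$, vanishes on $\partial_l Q^{+}$, so there are constants $C_0,\alpha>0$ depending only on $n,\lambda,\Lambda,\Omega$ with $u(x)\le C_0\hat u(0)e^{-\alpha t}$ for $x=(y,t)\in Q^{+}$. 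Taking the supremum over $y\in\Omega$ gives $\hat u(t)\le C_0\hat u(0)e^{-\alpha t}\to0$ as $t\to+\infty$. Hence $m(u)=\lim_{t\to+\infty}\hat u(t)=0$.

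The main obstacle I anticipate is the strictness of the monotonicity at small time increments $0<t_1-t_0<1$: the iteration argument cleanly gives $\hat u(t+1)\le(1-\delta)\hat u(t)$, i.e. genuine decay over unit steps, but to upgrade "nonincreasing" to "strictly decreasing" on every subinterval one needs to rule out $\hat u$ being locally constant. The cleanest route is the one sketched above — $\hat u$ nonincreasing plus periodic strict drops forces strict monotonicity everywhere — but one should be careful to phrase it so that it does not secretly require the strong maximum principle (a Hopf-type statement), which would need the interior regularity $u\in W^{2,1}_{n+1,\mathrm{loc}}$ and the structure of $L$; in fact the elementary iteration argument suffices and avoids this. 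A secondary point worth checking is that $\hat u(t)>0$ for all $t$, not merely $\hat u(0)>0$: this follows because if $\hat u(t_*)=0$ for some $t_*$ then $u\equiv0$ on $Q_{t_*}$ and, by the maximum principle applied on $Q_{(t_*-1,t_*)}$ with the (now zero) data on $\partial_b$ and $\partial_l$, $u\le0$ on $Q_{(t_*-1,t_*)}$, contradicting $u>0$ in $Q$; so the division by $\hat u(t_0)$ above is legitimate.
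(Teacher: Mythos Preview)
Your argument for $m(u)=0$ via Lemma~\ref{decayf} is correct and is exactly what the paper does.

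The gap is in your treatment of strict monotonicity on short intervals. You correctly derive that $\hat u$ is nonincreasing (weak maximum principle on finite cylinders) and that $\hat u(s+1)\le(1-\delta)\hat u(s)$ for every $s$ (Corollary~\ref{mpgf} with $f=0$). But these two facts together do \emph{not} force $\hat u$ to be strictly decreasing: a nonincreasing function can sit constant on an interval of length less than $1$ and still satisfy the unit-step decay inequality at every point. Your claimed contradiction---that $\hat u$ constant on $[t_0,t_1]$ together with $\hat u(t_1+1)<\hat u(t_1)$ ``forces a contradiction once we iterate past $t_1$''---is not a contradiction at all; nothing in your hypotheses prevents $\hat u$ from plateauing briefly and then dropping. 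The iteration argument alone does not close this case, contrary to what you assert in the last paragraph.

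The paper handles this with the strong maximum principle, which you explicitly tried to sidestep. If $\hat u(t_2)=\hat u(t_1)=M>0$, then $u\le M$ on $Q_{(t_1,t_2)}$ by the weak principle, and $u(y^*,t_2)=M$ at some $y^*\in\Omega$; since $(y^*,t_2)$ is not on the parabolic boundary, the strong maximum principle \cite[Theorem~2.7]{Li} gives $u\equiv M$ on $Q_{(t_1,t_2)}$, contradicting $u=0$ on $\partial_l Q_{(t_1,t_2)}$. This is one line and uses only the regularity $u\in W^{2,1}_{n+1,\mathrm{loc}}$ already built into the definition of a strong solution.

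A minor side remark: your check that $\hat u(t_*)>0$ invokes the maximum principle on $Q_{(t_*-1,t_*)}$ with ``zero data on $\partial_b$'', but $\partial_b$ is the \emph{bottom} $\Omega\times\{t_*-1\}$, not the top where you know $u$ vanishes; parabolic estimates do not propagate backward in time that way. This is moot, however, since $u>0$ in $Q$ is part of the definition of $U$, so $\hat u(t)>0$ for all $t$ is immediate.
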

\begin{proof}
We first prove that $\hat{u}(t)$ is a strictly decreasing function in $\mathbb{R}$. In fact, for any $-\infty<t_{1}<t_{2}<+\infty$, by using maximum principle for $u$ in $Q_{(t_{1},t_{2})}$, we have $0<\hat{u}(t_{2})\leq\hat{u}(t_{1})$. If $\hat{u}(t_{2})=\hat{u}(t_{1})>0$, then by strong maximum principle \cite[Theorem 2.7]{Li}, $u(x)=\hat{u}(t_{1})>0$ for any $x\in Q_{(t_{1},t_{2})}$. By continuity of $u$, it follows that $u(x)=\hat{u}(t_{1})>0$ on $\partial_{l}Q_{(t_{1},t_{2})}$, which contradicts $u=0$ on $\partial_{l}Q_{(t_{1},t_{2})}$. Hence $\hat{u}(t)$ is strictly decreasing in $\mathbb{R}$.

Now we prove $m(u)=0$. 
By Lemma \ref{decayf}, we get that $\lim\limits_{t\rightarrow+\infty}\hat{u}(t)=0$. This implies that $m(u)=0.$ 

\end{proof}



Now we establish the structure of the positive solution set $U$.

\
\

{\bf The proof of Theorem \ref{se0}.} By Lemma \ref{sef}(see Section 5), one gets that $U$ is well defined. For any $u,v\in U$, we set
$$E=\{k>0|u(x)\leq kv(x), x\in Q\},\quad K=\inf E.
$$
By Lemma $\ref{us}$, we know $\frac{u(0,1)}{v(0,1)}C_{*}^{2}\in E$, so $E\neq\emptyset$ and $K\geq 0$. Note that $Kv(x)-u(x)\geq0$ for any $x\in Q$, if $K=0$, then $u(x)\leq0$. This is a contradiction to that $u>0$ in $Q$, hence $K>0$.

Now we claim that
$$Kv(x)-u(x)=0,\quad x\in Q.
$$
We prove the claim by contradiction. If $Kv(x)-u(x)>0$, this implies that $Kv-u\in U$. Then by Lemma $\ref{us}$, there exists a constant $K_{1}\geq1$ such that $v(x)\leq K_{1}(Kv(x)-u(x))$ in $Q$, i.e.
$$(K-\frac{1}{K_{1}})v(x)-u(x)\geq0.
$$
This means that $K-\frac{1}{K_{1}}\in E$, which contradicts the definition of $K$. Hence we get that $u=Kv$ in $Q$.\qed

\
\

In the following, we prove the asymptotic behavior of positive solutions.

\
\

{\bf The proof of Theorem \ref{th2}.} Firstly, we claim that for any $u\in U$, there exists a constant $\theta>0$ depending only on $n,\lambda,\Lambda,\Omega$ such that
\begin{equation}\label{i2}
\hat{u}(t-1)\leq(1+\theta)\hat{u}(t),~~t\in(-\infty,+\infty),
\end{equation}
In fact, by Lemma $\ref{bhc}$, there exists a constant $C>0$ depending only on $n,\lambda,\Lambda,\Omega$ such that for any $t_{0}\in\mathbb{R}$,
$$u(x)\leq Cu(0,t_{0})\leq C\hat{u}(t_{0}),\quad x\in Q_{(t_{0}-2,t_{0}+2)}.
$$
Hence there exists $\theta>0$ depending only on $n,\lambda,\Lambda,\Omega$ such that
$$\hat{u}(t_{0}-1)\leq(1+\theta)\hat{u}(t_{0}), t_{0}\in\mathbb{R}.
$$
Next we claim that for any $u\in U$, there exists a constant $\eta$ depending only on $n,\lambda,\Lambda,\Omega$ such that
\begin{equation}\label{i3}
(1+\eta)\hat{u}(t+1)\leq\hat{u}(t),~~ t\in(-\infty,+\infty).
\end{equation}
We prove this claim by contradiction. If not, then for any $k\in\mathbb{N}^{+}$, there exists $t_{k}\in\mathbb{R}$ such that
$$(1+\frac{1}{k})\hat{u}(t_{k}+1)>\hat{u}(t_{k}).
$$
For $u$ in $Q_{(t_{k},t_{k}+2)}$, by Corollary $\ref{mpgf}$, we get that
$$\hat{u}(t_{k}+1)\leq(1-\delta)\hat{u}(t_{k})<(1-\delta)(1+\frac{1}{k})\hat{u}(t_{k}+1).
$$
Let $k$ large enough such that $(1-\delta)(1+\frac{1}{k})<1$, then
$$\hat{u}(t_{k}+1)<(1-\delta)(1+\frac{1}{k})\hat{u}(t_{k}+1)<\hat{u}(t_{k}+1).$$
This is a contradiction. Hence (\ref{i3}) holds.

By $(\ref{i2})$, we have
$$\hat{u}(t)\geq (1+\theta)^{-([t]+1)}\hat{u}(t-[t]-1)\geq (1+\theta)^{-(t+1)}\hat{u}(0),\quad t\in(0,+\infty).
$$
$$\hat{u}(t)\leq (1+\theta)^{[|t|]+1}\hat{u}(t+[|t|]+1)\leq (1+\theta)^{|t|+1}\hat{u}(0),\quad t\in(-\infty,0).
$$

By $(\ref{i3})$, we have
$$\hat{u}(t)\leq (1+\eta)^{-[t]}\hat{u}(t-[t])\leq (1+\eta)^{-(t-1)}\hat{u}(0)=(1+\eta)^{-t+1}\hat{u}(0),\quad t\in(0,+\infty)
$$
$$\hat{u}(t)\geq (1+\eta)^{[|t|]}\hat{u}(t+[|t|])\geq (1+\eta)^{|t|-1}\hat{u}(0),\quad t\in(-\infty,0).
$$

By taking $\alpha=\ln(1+\theta)$, $\beta=\ln(1+\eta)$, $C=\frac{1}{1+\eta}$, $C'=1+\theta$, then we get that $(\ref{e1})$-$(\ref{e2})$ hold.\qed

\
\

{\bf The proof of Theorem \ref{th3}.} For any $j\in\mathbb{N}_{+}$, we define
$$E_{j}=\{k>0|u(x)\leq kw(x),~x\in Q_{(j,+\infty)}\},\quad K_{j}=\inf E_{j},
$$
$$F_{j}=\{l>0|u(x)\geq lw(x),~x\in Q_{(j,+\infty)}\},\quad L_{j}=\sup F_{j}.
$$
By Lemma $\ref{us}$, we have that $\frac{u(0,1)}{w(0,1)}C_{*}^{2}\in E_{j}$ and $\frac{u(0,1)}{w(0,1)C_{*}^{2}}\in F_{j}$ for any $j\in\mathbb{N}_{+}$. This means that $E_{j}\neq\emptyset$ with a lower bound 0 and $F_{j}\neq\emptyset$ with a upper bound $\frac{u(0,1)}{w(0,1)}C_{*}^{2}$. It follows that $0< L_{j}\leq K_{j}<+\infty$.

Now we claim that there exists a constant $0<\zeta<1$ depending only on $n,\lambda,\Lambda,\Omega$ such that
\begin{equation}\label{kj}
K_{j+1}-L_{j+1}\leq\zeta(K_{j}-L_{j}).
\end{equation}
In fact, for any $y\in\Omega$ and $j\in\mathbb{N}_{+}$,
$$0<L_{j}w(y,j+1)\leq L_{j+1}w(y,j+1)\leq u(y,j+1)\leq K_{j+1}w(y,j+1)\leq K_{j}w(y,j+1).
$$
Then it follows that
\begin{equation}\label{uw1}
u(y,j+1)\geq L_{j}w(y,j+1)+\frac{1}{2}(K_{j}-L_{j})w(y,j+1),
\end{equation}
or
\begin{equation}\label{uw2}
u(y,j+1)\leq K_{j}w(y,j+1)-\frac{1}{2}(K_{j}-L_{j})w(y,j+1).
\end{equation}
If $u$ satisfies $(\ref{uw1})$, by Lemma $\ref{usc}$, there exists constant $C_{*}\geq1$ such that
$$u(x)-L_{j}w(x)\geq \frac{1}{2C_{*}}(K_{j}-L_{j})w(x),\quad x=(y,t)\in Q_{[j+1,+\infty)}.
$$
Then we get that
$$L_{j}w(x)+\frac{1}{2C_{*}}(K_{j}-L_{j})w(x)\leq u(x),\quad x=(y,t)\in Q_{[j+1,+\infty)}.
$$
By the definition of $L_{j+1}$, we get that
$$L_{j+1}\geq L_{j}+\frac{1}{2C_{*}}(K_{j}-L_{j}).$$
Thus we obtain that
\begin{equation}\label{uw3}
K_{j+1}-L_{j+1}\leq K_{j}-\left(L_{j}+\frac{1}{2C_{*}}(K_{j}-L_{j})\right)=(1-\frac{1}{2C_{*}})(K_{j}-L_{j}).
\end{equation}
If $(\ref{uw2})$ is satisfied, by similar arguments, we can also get that $(\ref{uw3})$ holds.

Note that $\{K_{j}\}_{j=1}^{\infty}$ is a decreasing sequence and $\{L_{j}\}_{j=1}^{\infty}$ is an increasing sequence. By (\ref{kj}), we get that
$$K_{j}-L_{j}\leq \zeta^{j-1}(K_{1}-L_{1})\leq Ce^{j\ln\zeta}.
$$
Hence there exists a constant $K>0$ such that $\lim\limits_{j\rightarrow+\infty}K_{j}=K=\lim\limits_{j\rightarrow+\infty}L_{j}$.
Thus we can calculate that for any $x=(y,t)\in Q_{(1,+\infty)}$,
$$|u(x)-Kw(x)|\leq(K_{[t]}-L_{[t]})w(x)\leq Ce^{[t]\ln\zeta}w(x)\leq \frac{C}{\zeta}e^{-\alpha t}w(x),\quad \alpha=-\ln\zeta.
$$
\qed

\section{The structure of $\tilde{U}$}
In this section, we establish the structure of $\tilde{U}$. First, we demonstrate a result about the existence and uniqueness of bounded solutions in $Q$.
\begin{lm}\label{sef}
Let $f\in L_{\text{loc}}^{n+1}(Q)$ with $\|f\|_{L_{*}^{n+1}(Q)}<+\infty$. Then the following Dirichlet problem
\begin{equation}\label{sol}
\left\{
\begin{array}{rcll}
Lu(x) &=& f(x), \qquad &x\in Q,\\
u(x)&=& 0,  \qquad &x\in \partial Q, \\
\end{array}
\right.
\end{equation}
has a unique bounded solution $u\in W^{2,1}_{n+1,\text{loc}}(Q)\cap C(\bar{Q})$.
\end{lm}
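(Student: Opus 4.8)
The goal is to solve the Dirichlet problem $Lu=f$ in $Q=\Omega\times\mathbb{R}$ with zero lateral boundary data, producing a \emph{bounded} strong solution in $W^{2,1}_{n+1,\mathrm{loc}}(Q)\cap C(\bar Q)$, and to show it is unique among bounded solutions. Uniqueness is the easy half: if $u_1,u_2$ are two bounded solutions, then $w=u_1-u_2$ satisfies $Lw=0$ in $Q$, $w=0$ on $\partial Q$, and $w$ is bounded above and below; applying the maximum principle in $Q$ (Lemma~\ref{mpqf}) to $w$ and to $-w$ with $f\equiv 0$ gives $\sup_Q w^+\le 0$ and $\sup_Q(-w)^+\le 0$, hence $w\equiv 0$.

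\textbf{Existence via exhaustion.} The plan is to build $u$ as a limit of solutions on the truncated cylinders $Q_{(-k,k)}$. For each $k\in\mathbb{N}_+$, solve the initial-boundary value problem
\begin{eqnarray*}
\left\{
\begin{array}{rcll}
Lu_k(x) &=& f(x), \qquad &x\in Q_{(-k,k)},\\
u_k(x)&=& 0,  \qquad &x\in\partial_p Q_{(-k,k)},
\end{array}
\right.
\end{eqnarray*}
which has a unique solution $u_k\in W^{2,1}_{n+1,\mathrm{loc}}(Q_{(-k,k)})\cap C(\overline{Q_{(-k,k)}})$ by the standard existence theory for non-divergence uniformly parabolic equations with $L^{n+1}$ right-hand side and Lipschitz domain (e.g.\ \cite[Theorem 5.1]{Li}), using $c\ge 0$ for uniqueness. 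The crucial point is a \emph{uniform} bound: I claim there is a constant $C$ depending only on $n,\lambda,\Lambda,\Omega$ such that $\|u_k\|_{L^\infty(Q_{(-k,k)})}\le C\|f\|_{L^{n+1}_*(Q)}$ for all $k$. This follows by running the same iteration as in Lemma~\ref{mpqf}: applying Corollary~\ref{mpgf} on successive slabs $Q_{(j-1,j+1)}\subset Q_{(-k,k)}$ starting from $\hat u_k(-k)=0$ (the bottom data vanish), one gets $\hat u_k(j)\le \frac{1}{\varepsilon_0}\sum_{i\ge 1}(1-\delta)^i\,\|f\|_{L^{n+1}_*(Q)}$, and likewise for $-u_k$; combined with the maximum principle on each unit slab this bounds $|u_k|$ on all of $Q_{(-k,k)}$ independently of $k$. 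Note this estimate is exactly where the hypothesis $\|f\|_{L^{n+1}_*(Q)}<+\infty$ is used, and where the geometry ($\mathrm{diam}\,\Omega$ fixed, slabs of bounded parabolic size) makes the constant $k$-independent.

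\textbf{Compactness and passage to the limit.} With the uniform $L^\infty$ bound in hand, interior and boundary $W^{2,1}_{n+1}$ estimates (again \cite[Theorem 5.1]{Li}) give, for each fixed compact slab $Q_{(-N,N)}$, a bound on $\|u_k\|_{W^{2,1}_{n+1}(Q_{(-N,N)})}$ uniform in $k\ge N+1$, depending on $N$, $\|f\|_{L^{n+1}(Q_{(-N-1,N+1)})}$ and the structural constants. A diagonal argument then extracts a subsequence converging weakly in $W^{2,1}_{n+1}$ on every $Q_{(-N,N)}$, and (by the compact Sobolev embedding $W^{2,1}_{n+1}\hookrightarrow C^\alpha$, cf.\ \cite[Theorem 5.1]{Gr}) strongly in $C(\overline{Q_{(-N,N)}})$, to a limit $u\in W^{2,1}_{n+1,\mathrm{loc}}(Q)\cap C(\bar Q)$. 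Weak convergence in $W^{2,1}_{n+1}$ passes the equation $Lu=f$ to the limit a.e.\ on each slab, hence on $Q$; uniform convergence passes the boundary condition $u_k=0$ on $\partial_l Q_{(-N,N)}$ to $u=0$ on $\partial Q$; and the uniform $L^\infty$ bound is inherited by $u$, so $u$ is bounded. This produces the desired bounded solution, and with the uniqueness argument above we conclude $U^0=\{u_0\}$ is a singleton.

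\textbf{Main obstacle.} The only genuinely delicate step is the $k$-\emph{independence} of the $L^\infty$ bound on $u_k$; everything else is assembling standard parabolic regularity and a routine diagonal compactness argument. One must be careful that the iteration from Corollary~\ref{mpgf} is applied only on slabs strictly inside $Q_{(-k,k)}$ (so that the lateral-boundary hypothesis $u_k\le 0$ on $\partial_l$ holds there) and that the bottom contribution $(1-\delta)^m\hat u_k(-k)=0$ genuinely drops out, leaving a geometric series controlled solely by $\|f\|_{L^{n+1}_*(Q)}$; the symmetric estimate for $-u_k$ requires $c\ge 0$ so that $L(-u_k)=-f$ with the same operator structure, which is available by hypothesis.
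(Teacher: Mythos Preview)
Your proof follows the same overall architecture as the paper's: solve on truncated cylinders $Q_{(-k,k)}$ with zero parabolic boundary data, establish a $k$-independent $L^\infty$ bound, extract a limit by compactness, and get uniqueness from Lemma~\ref{mpqf}. The one substantive difference is in how the uniform bound is obtained. You iterate Corollary~\ref{mpgf} forward from the bottom, using $\hat u_k(-k)=0$ and summing the geometric series in $(1-\delta)$---essentially rerunning the proof of Lemma~\ref{mpq+f} on the finite cylinder. The paper instead sets $M=\|u_N\|_{L^\infty(Q_{(-N,N)})}$, applies Corollary~\ref{mpgf} once on a generic interior slab to get $\hat u_N(\xi)\le(1-\delta)M+CF$ for $\xi\in(-N+1,N-1)$, handles the two edge slabs $(-N,-N+1)$ and $(N-1,N)$ by the ordinary maximum principle, and then absorbs: $M\le(1-\delta)M+C_2F$ gives $M\le C_2F/\delta$. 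Both arguments are valid; yours exploits the zero initial data directly, theirs is a one-step self-improvement that does not track the iteration.

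One small imprecision in your compactness step: you invoke ``interior and boundary $W^{2,1}_{n+1}$ estimates'' on the full slab $Q_{(-N,N)}$, but for a merely Lipschitz cross-section $\Omega$ global $W^{2,1}_{n+1}$ bounds up to $\partial\Omega$ are not generally available, so the embedding $W^{2,1}_{n+1}\hookrightarrow C^\alpha$ does not by itself give equicontinuity up to the lateral boundary. The paper circumvents this by using the boundary H\"older estimate \cite[Theorem 5.1]{Gr} directly for equicontinuity on $\overline{Q_{(-L,L)}}$ and then Arzel\`a--Ascoli; interior $W^{2,1}_{n+1}$ estimates suffice to pass the equation and to place the limit in $W^{2,1}_{n+1,\mathrm{loc}}(Q)$. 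Your argument is easily repaired along the same lines, and you already cite the relevant Gruber estimate.
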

\begin{proof}
For any $N\in\mathbb{N}_{+}$, consider the following Dirichlet problem in $Q_{(-N,N)}$:
\begin{eqnarray*}
\left\{
\begin{array}{rcll}
Lu(x) &=& f(x), \qquad &x\in Q_{(-N,N)},\\
u(x)&=& 0,  \qquad &x\in\partial_{p} Q_{(-N,N)}.\\
\end{array}
\right.
\end{eqnarray*}
By the classical existence theory \cite[Theorem 7.32]{Li}, we get that there exists a unique solution $u_{N}\in W_{n+1,\text{loc}}^{2,1}(Q_{(-N,N)})\cap C(\overline{Q_{(-N,N)}})$. By maximum principle, we have
$$\|u_{N}\|_{L^{\infty}(Q_{(-N,N)})}\leq C_{N}\|f\|_{L^{n+1}(Q_{(-N,N)})},
$$
where $C_{N}$ depends only on $n,\lambda,\Lambda,\Omega,N$.

In the following, we prove that there exists a constant $C_{0}>0$ not depending on $N$ such that
$$\|u_{N}\|_{L^{\infty}(Q_{(-N,N)})}\leq C_{0}\|f\|_{L_{*}^{n+1}(Q)}.
$$
For convenience, we denote $M=\|u_{N}\|_{L^{\infty}(Q_{(-N,N)})}$. For any $\xi\in[-N+1,N-1]$, $Q_{(\xi-1,\xi+1)}\subset Q_{(-N,N)}$. By using Corollary $\ref{mpgf}$ to $u_{N}$ with $t_{0}=\xi-1$, we have
\begin{eqnarray*}
u_{N}(y,\xi)\leq(1-\delta)M+\frac{1-\delta}{\varepsilon_{0}}\|f\|_{L^{n+1}(Q_{(\xi-1,\xi+1)})},~~ y\in\Omega.
\end{eqnarray*}
Take the supreme of $\xi$ in $(-N+1,N-1)$, it follows that
\begin{eqnarray*}
\sup\limits_{\xi\in(-N+1,N-1)}\hat{u}_{N}(\xi)&\leq&(1-\delta)M+\frac{1-\delta}{\varepsilon_{0}}
\sup\limits_{\xi\in(-N+1,N-1)}\|f\|_{L^{n+1}(Q_{(\xi-1,\xi+1)})}\\
&\leq&(1-\delta)M+\frac{1-\delta}{\varepsilon_{0}}\|f\|_{L_{*}^{n+1}(Q)}.
\end{eqnarray*}
That is
\begin{eqnarray*}
\sup\limits_{x\in Q_{(-N+1,N-1)}}u_{N}(x)\leq(1-\delta)M+\frac{1-\delta}{\varepsilon_{0}}\|f\|_{L_{*}^{n+1}(Q)}.
\end{eqnarray*}
Furthermore, we have
$$\sup\limits_{x\in Q_{(-N+1,N-1)}}|u_{N}(x)|\leq(1-\delta)M+\frac{1-\delta}{\varepsilon_{0}}\|f\|_{L_{*}^{n+1}(Q)}.
$$
For any $x\in Q_{(-N,-N+1)}$, by maximum principle, we have
\begin{eqnarray*}
\|u_{N}\|_{L^{\infty}(Q_{(-N,-N+1)})}&\leq&\|u_{N}\|_{L^{\infty}(\partial_{p}Q_{(-N,-N+1)})}+ C_{1}\|f\|_{L^{n+1}(Q_{(-N,-N+1)})}\\
&\leq&C_{1}\|f\|_{L^{n+1}(Q_{(-N,-N+1)})}\\
&\leq&C_{1}\|f\|_{L_{*}^{n+1}(Q)}.
\end{eqnarray*}
Similarly, for any $x\in Q_{(N-1,N)}$, by maximum principle, we have
\begin{eqnarray*}
\|u_{N}\|_{L^{\infty}(Q_{(N-1,N)})}&\leq&\|u_{N}\|_{L^{\infty}(\partial_{p}Q_{(N-1,N)})}+ C'_{1}\|f\|_{L^{n+1}(Q_{(N-1,N)})}\\
&\leq&\sup\limits_{x\in Q_{(-N+1,N-1)}}|u_{N}(x)|+C'_{1}\|f\|_{L^{n+1}(Q_{(N-1,N)})}\\
&\leq&(1-\delta)M+\frac{1-\delta}{\varepsilon_{0}}\|f\|_{L_{*}^{n+1}(Q)}+ C'_{1}\|f\|_{L_{*}^{n+1}(Q)}
\\&=&
(1-\delta)M+(\frac{1-\delta}{\varepsilon_{0}}+C'_{1})\|f\|_{L_{*}^{n+1}(Q)}.
\end{eqnarray*}
By taking $C_{2}=\max\{C_{1},\frac{1-\delta}{\varepsilon_{0}}+C'_{1}\}$, it follows that
$$\|u_{N}\|_{L^{\infty}(Q_{(-N,N)})}\leq (1-\delta)M+C_{2}\|f\|_{L_{*}^{n+1}(Q)},
$$
where $C_{2}$ depends only on $n,\lambda,\Lambda,\Omega$. The above inequality implies that $M\leq (1-\delta)M+C_{2}\|f\|_{L_{*}^{n+1}(Q)}$, i.e.
$$\|u_{N}\|_{L^{\infty}(Q_{(-N,N)})}=M\leq C_{0}|f\|_{L_{*}^{n+1}(Q)},
$$
where $C_{0}=\frac{C_{2}}{\delta}$ depends only on $n,\lambda,\Lambda,\Omega$.

Hence for any $L\in\mathbb{N}_{+}$ with $L<N$, we have
$$\|u_{N}\|_{L^{\infty}(Q_{(-L,L)})}\leq\|u_{N}\|_{L^{\infty}(Q_{(-N,N)})}\leq C_{0}|f\|_{L_{*}^{n+1}(Q)}.
$$
By the boundary H\"{o}lder estimate, there exists a constant $C_{*}>0$ depending only on $n,\lambda,\Lambda,\Omega,L$ and $0<\alpha<1$ such that
$$[u_{N}]_{C^{\alpha}(\overline{Q_{(-L,L)}})}\leq C_{*}.
$$
For $L=1$, by the Arzel\'{a}-Ascoli Theorem, there exists a subsequence of $\{u_{N}\}_{N=1}^{\infty}$, denoted by $\{u^{(1)}_{N}\}_{N=1}^{\infty}$, such that $\{u^{(1)}_{N}\}$ converges uniformly in $Q_{(-1,1)}$. For $L=2$, there also exists a subsequence of $\{u^{(1)}_{N}\}_{N=1}^{\infty}$, denoted by $\{u^{(2)}_{N}\}_{N=1}^{\infty}$ such that $\{u^{(2)}_{N}\}$ converges uniformly in $Q_{(-2,2)}$. To continue, for any $L\in\mathbb{N}_{+}$, there exists a sequence $\{u^{(L)}_{N}\}_{N=1}^{\infty}$ converges uniformly in $Q_{(-L,L)}$. Then the dialogue sequence $\{u^{(N)}_{N}\}_{N=1}^{\infty}$ converges uniformly in $Q_{(-L,L)}$ for any $L\in\mathbb{N}_{+}$, thus there exists a function $u(x)$ such that $u_{N}$ uniformly converges to $u(x)$ in $W^{2,1}_{n+1,\text{loc}}(Q)\cap C(\bar{Q})$.
Therefore, $u$ is bounded solution of $(\ref{sol})$. By Lemma \ref{mpqf}, we get the uniqueness of the solution.
\end{proof}

\
\

{\bf The proof of Theorem \ref{th4}.} By Lemma $\ref{sef}$, there exists a unique bounded solution $v\in W^{2,1}_{n+1,\text{loc}}(Q)\cap C(\overline{Q})$ for the following problem:
\begin{equation*}
\left\{
\begin{array}{rcll}
Lv(x) &=& f(x), \qquad &x\in Q,\\
v(x)&=& 0,  \qquad &x\in \partial Q. \\
\end{array}
\right.
\end{equation*}
Since $u$ is bounded from below, there exists a constant $C>0$ such that $u-v\geq -C$ in $Q$. Moreover, one gets that $u-v$ satisfies
\begin{eqnarray*}
\left\{
\begin{array}{rcll}
L(u-v)(x) &=& 0, \qquad &x\in Q, \\
(u-v)(x)&=& 0, \qquad &x\in\partial Q,\\
(u-v)(x)&\geq& -C, \qquad &x\in Q.
\end{array}
\right.
\end{eqnarray*}
Since $v-u$ is bounded from above, by Lemma $\ref{mpqf}$, we get that $u-v\geq0$. Thus either $u\equiv v$ or $u-v>0$. If $u=v$, then our conclusion clearly holds by taking $a=0$. If $u-v>0$, by Theorem $\ref{se0}$, there exists $w\in U$ such that $u-v=aw$, that is $u=v+aw$. Therefore we obtain our result:
$$\tilde{U}=U^{0}+U=\{u_{0}+au|a\geq0\},
$$
where $U^{0}=\{v\}$ is the unique bounded solution to $Lu_{0}=f$ in $Q$ with zero boundary condition.\qed


\section{Acknowledgements}
The authors thank Professors Lihe Wang and Chunqin Zhou for helpful discussions and suggestions. The authors also would like to thank the anonymous reviewers' time and comments on this paper.

\
 \

 {\bf Data availability} Not applicable.

 \
 \

{\bf Declarations}

\

{\bf Conflict of interest} The authors state that there are no conflicts of interests in the preparation of manuscript.

\end{document}